\documentclass[a4paper]{amsart}
\usepackage[utf8]{inputenc}
\usepackage{amssymb,pstricks,amscd}
\usepackage[T1]{fontenc}
\usepackage[totalwidth=15cm,totalheight=22cm]{geometry}
\usepackage{graphicx}
\usepackage[title,titletoc,toc]{appendix}
%\input diagrams
%\input psfig
%\diagramstyle[nohug]
%\parindent0cm
%\parskip=.5\baselineskip

\newtheorem{cor}{Corollary}[section]
\newtheorem{thm}[cor]{Theorem}
\newtheorem{prop}[cor]{Proposition}
\newtheorem{lemma}[cor]{Lemma}

\theoremstyle{definition}
\newtheorem{defi}[cor]{Definition}
\theoremstyle{remark}
\newtheorem{remark}[cor]{Remark}

\newcommand{\N}{{\mathbb N}}
\newcommand{\Q}{{\mathbb Q}}

\newcommand{\R}{{\mathbb R}}

\newcommand{\be}{\begin{eqnarray}}
\newcommand{\ee}{\end{eqnarray}}

%%% Dautres commandes
\renewcommand{\S}{{\mathbb S}^m}
\newcommand{\Sp}{\mathbb S}
\def\Ri{\mathbb{R}^+ \cup \{+\infty\}}
\renewcommand{\P}{\mathcal{P} (\S)}
\def\G{\Gamma (\sigma,\mu)}

\sloppy
\newcommand{\C}{\mathcal{C}}
\newcommand{\F}{\mathcal{F}}

\renewcommand{\vec}[1]{\overrightarrow{#1}}
\renewcommand{\tilde}[1]{\widetilde{#1}}
\renewcommand{\bar}[1]{\overline{#1}}
\newcommand{\ep}{\varepsilon}
\newcommand{\supp}{\mbox{supp}}

\renewenvironment{keywords}{
       \list{}{\advance\topsep by0.35cm\relax\small
       \leftmargin=1cm
       \labelwidth=0.35cm
       \listparindent=0.35cm
       \itemindent\listparindent
       \rightmargin\leftmargin}\item[\hskip\labelsep
                                     \bfseries Keywords:]}
     {\endlist}

\def\res{\mathop{\hbox{\vrule height 7pt width .5pt depth 0pt \vrule height .5pt width 6pt depth 0pt}}\nolimits}

\begin{document}

\title{Prescription of Gauss curvature using Optimal Mass Transport}
\author{J\'er\^ome Bertrand}
\address{Institut de Math\'ematiques de Toulouse, UMR CNRS 5219 \\
Universit\'e Toulouse III \\
31062 Toulouse cedex 9, France}
\email{jerome.bertrand@math.univ-toulouse.fr}
%\thanks{MSC }
\thanks{{\it Mathematics Subject Classification} (2010): Primary 49Q20; Secondary 52C07, 53C42.}

\begin{abstract}
In this paper we give a new proof of a theorem by Alexandrov on the Gauss curvature prescription of Euclidean convex sets. This proof is based on the duality theory of convex sets and on optimal mass transport. A noteworthy property of this proof is that it does not rely neither on the theory of convex polyhedra nor on P.D.E. methods (which appeared in all the previous proofs of this result). 
\begin{keywords}
{Convex bodies, Gauss curvature, optimal mass transport}
\end{keywords}
\end{abstract}

\maketitle

%%%%%%%%%%%%%%%%%%%%%%%%%%%%%%%%%%%%%%%%%%%%%%%%%%

\section{Introduction}
In this paper, we consider the problem of prescribing the Gauss curvature (in a generalised measure-theoretic sense) of convex sets in the Euclidean space. Our aim is  to prove this geometrical statement by solving an appropriate variational problem. 

\subsection{Alexandrov's problem in Euclidean space}
We start with the description of the Euclidean result which was proved by A.D. Alexandrov in  \cite{Alex42, AlexCP}. 

In order to state the result, we recall the notion of Gauss curvature measure introduced by Alexandrov.  Consider a convex body ({\it i.e.} a closed bounded convex set whose interior is nonempty) $\Omega$ in $\R^{m+1}$ and assume that the origin of $\R^{m+1}$ is located within $\Omega$. Under these assumptions, the map
\begin{equation}\label{Ie1}\begin{array}{rccc} \vec{\rho} : &\S &\longrightarrow &\partial \Omega \\
                                          &x &\longmapsto      &\rho(x)x
 \end{array}
 \end{equation}
 is a homeomorphism  (where the radial function $\rho$ is defined by $\rho(x) = \sup \{ s; sx \in \Omega\}$). Note that $\rho$ is a Lipschitz function bounded away from $0$ and $\infty$.

 The Gauss curvature measure is the Borel probability measure 
$$ \mu := \sigma ({\mathcal G}\circ \vec{\rho}(\cdot))$$
where $\sigma$ stands for the uniform Borel probability measure on $\S$ (considered as the unit sphere centered at the origin) and ${\mathcal G} : \partial \Omega \rightrightarrows \S$ is the Gauss multivalued map. In other terms, the Gauss curvature measure is the {\it pull-back} of the uniform measure through the map ${\mathcal G}\circ \vec{\rho}$. The curvature measure $\mu$ is well-defined since
\begin{equation}\label{me32}
\sigma\left(\left\{n \in \S; \,\exists \,x_1\neq x_2\in \S ; \,{\mathcal G}(\rho(x_1)x_1) ={\mathcal G}(\rho(x_2)x_2)=n\right\}\right)=0
\end{equation}
(see \cite[Lemma 5.2]{bak} for a proof). % Bakelman convex analysis

Note also that the curvature measure depends on the location of the origin within the convex body and is invariant under homotheties about that point. 

By using the assumption $0 \in \,\stackrel{\circ}{\Omega}$, it is easy to convince oneself that for all non-empty spherical convex set $\omega \subsetneq \S$, 
 
\begin{equation}\label{assump}
 \mu(\omega) < \sigma (\omega_{\pi/2})
 \end{equation}
where $\omega_{\pi/2}= \{x \in \S; d(x,\omega)<\pi/2\}$ and $d(\cdot,\cdot)$ is the standard distance on $\S$.
 
\begin{remark}\label{remtriv} Note that (\ref{assump}) implies that $\mu$ cannot be supported in a closed hemisphere. Consequently, the distance between an arbitrary point in $\S$ and the support of $\mu$ is smaller than $\frac{\pi}{2}$.
\end{remark}
Alexandrov's theorem states that (\ref{assump}) is actually a sufficient condition for $\mu$ arising from this construction.
%Alexandrov found a necessary and sufficient condition for $\mu$ arising from the construction above (see Section \ref{massintro} for a precise definition).
%
%\begin{thm}[Alexandrov]\label{Imain1} Let $\sigma$ be the uniform probability measure on $\S$ and $\mu$ be a Borel probability measure on $\S$ satisfying  (\ref{assump}). There exists a unique convex body  in $\R^{m+1}$ containing $0$ in its interior (up to homotheties) whose $\mu$ is the Gauss curvature measure.
%   \end{thm}
 \begin{thm}[Alexandrov]\label{Imain1} Let $\sigma$ be the uniform probability measure on $\S$ and $\mu$ be a Borel probability measure on $\S$ satisfying for any non-empty convex set $\omega \subsetneq \S$,

\[ \mu(\omega) < \sigma (\omega_{\pi/2}).\]

 Then, there exists a unique convex body  in $\R^{m+1}$ containing $0$ in its interior (up to homotheties) whose $\mu$ is the Gauss curvature measure.
 \end{thm}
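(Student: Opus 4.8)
The plan is to recast Alexandrov's problem as an optimal mass transport problem on $\S$ for the cost
\[
c(x,y)=\begin{cases}-\ln\langle x,y\rangle & \text{if }\langle x,y\rangle>0,\\ +\infty & \text{otherwise,}\end{cases}
\]
so that $c(x,y)<\infty$ exactly when $d(x,y)<\pi/2$, and then to read off the convex body from a Kantorovich potential. The logarithm is the crucial choice: for $g:\S\to\R$ the $c$-transform $g^{c}(y):=\inf_{x}\big(c(x,y)-g(x)\big)$ satisfies $e^{-g^{c}(y)}=\sup_{x}e^{g(x)}\langle x,y\rangle_{+}$, i.e.\ $e^{-g^{c}}$ is the support function (restricted to $\S$) of the convex hull of $\{e^{g(x)}x:x\in\S\}$. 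Conversely, given a convex body $\Omega$ with $0\in\stackrel{\circ}{\Omega}$, write $\alpha_{\Omega}=\cG\circ\vec{\rho}$ for its radial Gauss map and $\beta_{\Omega}$ for the reverse radial Gauss map, which by (\ref{me32}) is single-valued $\sigma$-a.e.; put $\psi_{\Omega}:=\ln\rho_{\Omega}$ and $\phi_{\Omega}:=-\ln h_{\Omega}$, with $h_{\Omega}$ the support function of $\Omega$. Using $h_{\Omega}(n)=\langle\vec{\rho}(\beta_{\Omega}(n)),n\rangle$ and $\rho_{\Omega^{*}}=1/h_{\Omega}$ one checks that $(\psi_{\Omega},\phi_{\Omega})$ is a pair of \emph{bounded} $c$-conjugate functions and that the graph of $\beta_{\Omega}$ lies in the contact set $\Gamma(\psi_{\Omega},\phi_{\Omega}):=\{(x,n):\psi_{\Omega}(x)+\phi_{\Omega}(n)=c(x,n)\}$. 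Consequently the Gauss curvature measure of $\Omega$ is $(\beta_{\Omega})_{\#}\sigma$, and the plan $(\beta_{\Omega},\mathrm{id})_{\#}\sigma\in\Pi(\mu,\sigma)$ is concentrated on the $c$-cyclically monotone set $\Gamma(\psi_{\Omega},\phi_{\Omega})$ and has finite cost, hence is $c$-optimal. In other words: the reverse radial Gauss maps of convex bodies containing the origin are exactly the optimal transport maps, for the cost $c$, pushing their curvature measure onto $\sigma$.

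\noindent\textbf{Existence.} First I would use the hypothesis to show that the transport problem from $\mu$ to $\sigma$ for $c$ has finite value. Since the complement of $\omega_{\pi/2}$ is $\{n\in\S:\langle n,x\rangle\le0\ \text{for all }x\in\omega\}$, which depends only on the convex cone generated by $\omega$, the bound $\mu(\omega)<\sigma(\omega_{\pi/2})$ extends from spherical convex sets to all Borel sets: $\mu(F)\le\sigma(F_{\pi/2})$ for every Borel $F$. A Strassen-type marriage theorem then yields a plan concentrated on $\{d<\pi/2\}$; upgrading strictness to a uniform gap, $\mu(\omega)\le\sigma(\omega_{\pi/2-\ep})$ for some $\ep>0$ (by semicontinuity and compactness of the space of spherical convex sets), yields a plan concentrated on $\{d\le\pi/2-\ep\}$, where $c$ is bounded; so the Kantorovich value is finite and attained, and cyclical monotonicity of an optimal plan $\gamma$ provides $c$-conjugate potentials $(\psi,\phi)$ with $\gamma$ supported on $\Gamma(\psi,\phi)$. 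Now reverse the dictionary: set $L:=\overline{\mathrm{conv}}\{e^{\phi(n)}n:n\in\S\}$ and $\Omega:=L^{*}$, so that $\rho_{\Omega}=e^{\psi}$; a short computation identifies $\Gamma(\psi,\phi)$ with a subset of the graph of $\beta_{\Omega}$, and since $\beta_{\Omega}$ is single-valued $\sigma$-a.e.\ by (\ref{me32}) while $\gamma$ has second marginal $\sigma$, $\gamma$ must be the plan induced by $\beta_{\Omega}$; taking first marginals, the curvature measure of $\Omega$ equals $\mu$. \emph{The main obstacle} is to show that $\Omega$ is a genuine convex body with the origin in its interior, equivalently that $\phi$ can be chosen bounded above and below: this is exactly where the \emph{strictness} of $\mu(\omega)<\sigma(\omega_{\pi/2})$ is needed (with only a non-strict inequality $L$ could be unbounded or contain the origin on its boundary), and I expect it to require quantitative control of the potentials in terms of the deficit $\sigma(\omega_{\pi/2})-\mu(\omega)$ on spherical caps, together with Remark~\ref{remtriv}.

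\noindent\textbf{Uniqueness.} By the dictionary, any convex body $\Omega'$ with $0\in\stackrel{\circ}{\Omega'}$ and curvature measure $\mu$ yields a bounded optimal Kantorovich potential $\psi_{\Omega'}=\ln\rho_{\Omega'}$ for the \emph{same} transport problem. Since $\sigma$ has full support, $c$ is finite and continuous on the open dense set $\{d<\pi/2\}$, and $\supp\mu$ is not contained in any closed hemisphere (Remark~\ref{remtriv}), the optimal potential is unique up to an additive constant; hence $\rho_{\Omega'}$ is determined up to a positive multiplicative constant, i.e.\ up to a homothety centred at $0$. Combined with the existence part, this proves Theorem~\ref{Imain1}.
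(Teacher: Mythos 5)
Your plan follows the paper's route almost line for line: the same cost $c(n,x)=-\ln\langle n,x\rangle$, the same dictionary $\phi=\ln(1/h)$, $\psi=\ln\rho$ between convex bodies containing $0$ in their interior and pairs of Lipschitz $c$-conjugate functions (via the polar transform), finiteness of the transport cost obtained from the uniform strengthening $\mu(F)\le\sigma(F_{\pi/2-\ep})$ by a marriage/Strassen argument, identification of the contact set with the graph of the reverse Gauss map, and uniqueness of the body from uniqueness of the Kantorovich potential up to an additive constant. All of that matches Sections 2--4 of the paper and is correct in outline.

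The genuine gap is the clause ``cyclical monotonicity of an optimal plan $\gamma$ provides $c$-conjugate potentials $(\psi,\phi)$ with $\gamma$ supported on $\Gamma(\psi,\phi)$.'' For a cost taking the value $+\infty$ this is exactly what is \emph{not} standard, and it is the paper's main technical contribution (Theorem \ref{main}). The Rockafellar--R\"uschendorf formula
\[
\varphi(n)=\inf_{s}\,\inf\Big\{\sum_{i=0}^{s}\big(c(n_{i+1},x_i)-c(n_i,x_i)\big)\;;\;(n_i,x_i)\in\Gamma\Big\},\qquad n_{s+1}=n,
\]
can a priori give $\varphi\equiv+\infty$ outside $B(x_0,\pi/2)$ (every chain may contain an infinite-cost link) or $\varphi=-\infty$ on a set of positive $\sigma$-measure. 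You correctly sense that the strictness of Alexandrov's condition must enter at this point, but ``quantitative control of the potentials in terms of the deficit'' is a placeholder, not an argument. The paper's device is a finite-chain construction: set $A_0=\{x_0\}$, $A_{i+1}=p_x\big(p_n^{-1}((A_i)_{\pi/2})\cap\Gamma\big)$, and use the uniform form (\ref{rein}) of the hypothesis to get $\mu(A_{i+1})\ge\mu(A_i)+\epsilon$ as long as $(A_i)_{\pi/2}\neq\S$; hence after finitely many steps every $n\in\S$ is joined to $n_0$ by a chain of finite-cost links, which makes $\varphi$ finite where needed (and a symmetric argument gives $\varphi>-\infty$ on $p_n(\Gamma)$). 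A second, related omission: you need the resulting potential (after double $c$-conjugation) to be \emph{Lipschitz}, not merely bounded, both to recover a genuine convex body with $0$ in its interior and to run your uniqueness step, which really rests on a.e.\ differentiability of the potential and the resulting formula for the transport map; this is Proposition \ref{elip} in the paper and again uses that the contact set stays uniformly away from $\{d=\pi/2\}$. Without these two ingredients the proposal does not close.
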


 Our proof of Alexandrov's result is an easy consequence of a well-known result in optimal mass transport (but for non-standard cost function thus the proof is non trivial): Theorem \ref{transp}. Alexandrov's theorem is deduced from the latter result in Section \ref{esubE} through an elementary argument based on the classical notion of polar transform of bounded sets. 
 
\subsection{Optimal mass transport}

In this part, we introduce notation and briefly describe the optimal mass transport problem on $\S$. For much more on the subject, we refer to \cite{Vi03,Vi08}. This problem involves two probability measures, denoted by $\mu,\sigma \in \P$ in the sequel, and a cost function $c: \S\times \S \rightarrow \Ri $. We also need to introduce the set of {\emph transport plans} $\Gamma(\sigma,\mu)$, namely the set of probability measures $\Pi \in \mathcal{P}(\S\times \S)$ such that for any Borel set $A\subset \S$
\begin{equation}\label{planmargi} \sigma(A) = \Pi(A\times \S) \mbox{  and  } \mu(A)= \Pi(\S\times A).
\end{equation}
The transport plans can also be characterized in terms of continuous functions as follows. Given $f: \S \rightarrow \R$ a continuous fonction, it holds
\begin{equation}\label{planconti}
\int_{\mathbb{S}^m \times  \mathbb{S}^m } f(n) \, d\Pi (n,x)= \int_{\S} f(n) \, d\sigma(n) \mbox{ and } \int_{\mathbb{S}^m \times  \mathbb{S}^m } f(x) \, d\Pi (n,x)= \int_{\S} f(x) \, d\mu(x).
\end{equation}
%Equivalently, $\sigma(A)= \Pi( p_n^{-1}(A))$ and $\mu(A)= \Pi(p_x^{-1}(A)$ where $p_n$ and $p_x$ stand for the projections on the $n$ and $x$ coordinates respectively. 
Equipped with the topology induced by the weak convergence of probability measures, the set $\G$ is a {\emph compact set} as a consequence of the Banach-Alaoglu theorem. 

%%%%%%%%%%%%  La La 
The cost function $c$ we consider is defined by the formula
\begin{equation}\label{costdefi}
 c(n,x) = \left\{\begin{array}{lr}
-\log \langle n, x \rangle = -\log \cos d(n,x) & \mbox{ if } d(n,x) < \pi/2 \\
+ \infty & \mbox{ otherwise }
\end{array}\right.
\end{equation}

The cost function $c$ satisfies a standard set of assumptions in the field (highlighted in the lemma below) with the important exception that it is not real-valued. Therefore, some standard results do not apply to $c$. 
\begin{lemma} The cost function $c: \S \times \S \longrightarrow \Ri $ defined in (\ref{costdefi}) is a continuous map. Moreover, restricted to the open set $\{c < + \infty\}$, the function $c$ is a strictly convex and increasing smooth function of the spherical distance. Consequently, for $(n,x)$ in any fixed open set $\Omega$ such that  
$\bar{\Omega} \subset \{c < + \infty\}$, the function $(n,x) \rightarrow c(n,x)$ is a Lipschitz differentiable function.
\end{lemma}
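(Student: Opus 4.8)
The plan is to dispatch the three assertions in turn; each reduces to elementary one-variable calculus once one notices that $c$ is assembled from the manifestly smooth function $(n,x)\mapsto\langle n,x\rangle$ on $\S\times\S$. Write $U:=\{(n,x)\in\S\times\S;\ d(n,x)<\pi/2\}$, an open set which coincides with $\{\langle n,x\rangle>0\}$ and with $\{c<+\infty\}$. On $U$ the map $c$ is the composite of $(n,x)\mapsto\langle n,x\rangle$ with $-\log\colon(0,+\infty)\to\R$, hence continuous; on $\S\times\S\setminus U$ we have $c\equiv+\infty$, which is also continuous there. It then remains to check continuity at a point $(n,x)$ with $d(n,x)=\pi/2$: for any sequence $(n_k,x_k)\to(n,x)$ one has $\langle n_k,x_k\rangle\to 0$, and since $-\log t\to+\infty$ as $t\to 0^+$ whereas $c(n_k,x_k)=+\infty$ whenever $\langle n_k,x_k\rangle\le 0$, in all cases $c(n_k,x_k)\to+\infty=c(n,x)$ in $\Ri$. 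This gives continuity.

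Next, introduce $\phi\colon[0,\pi/2)\to\R$, $\phi(t):=-\log\cos t$, so that $c(n,x)=\phi(d(n,x))$ on $U$. Since $\cos$ is smooth and strictly positive on $[0,\pi/2)$ and $-\log$ is smooth on $(0,+\infty)$, $\phi$ is $C^\infty$; moreover $\phi(0)=0$, $\phi'(t)=\tan t\ge 0$ (with equality only at $t=0$) and $\phi''(t)=1/\cos^2 t>0$. Hence $\phi$ is strictly convex and increasing, which is precisely the statement that $c$ is a strictly convex, increasing, smooth function of the spherical distance.

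Finally, let $\Omega$ be open with $\bar{\Omega}\subset U$. As a closed subset of the compact space $\S\times\S$, $\bar{\Omega}$ is compact, so by continuity of the distance $\delta:=\sup_{\bar{\Omega}}d(\cdot,\cdot)<\pi/2$, and thus $\langle n,x\rangle\ge\cos\delta>0$ on $\bar{\Omega}$. On a neighbourhood of $\bar{\Omega}$ the function $c$ is therefore the composite of the smooth map $(n,x)\mapsto\langle n,x\rangle$, which there takes values in the compact interval $[\cos\delta,1]$, with the smooth function $-\log$; so $c$ is $C^\infty$ on that neighbourhood. In particular its first-order partial derivatives are continuous, hence bounded on the compact set $\bar{\Omega}$, which makes $c$ Lipschitz and differentiable on $\Omega$.

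There is no genuine obstacle here: the argument is routine. The only points deserving a moment's attention are the topology on $\Ri$ used in the continuity step, and the observation that $c$ remains smooth across the diagonal $\{n=x\}$ even though the distance function $d(\cdot,\cdot)$ is not differentiable there — which is exactly why it is preferable to phrase everything through $\langle n,x\rangle$ rather than directly through $d$.
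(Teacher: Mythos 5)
Your proof is correct. Note that the paper itself offers no proof of this lemma: it is stated as an elementary fact, accompanied only by the remark that $\Ri$ carries the order topology, so there is no argument of the author's to compare yours against. Your verification is the natural one, and you rightly isolate the two points that actually need a word — continuity at the locus $d(n,x)=\pi/2$ in the order topology, and smoothness across the diagonal, both handled by working with $\langle n,x\rangle$ rather than with $d(n,x)$.

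One micro-refinement for the last step: deducing that $c$ is Lipschitz \emph{on $\Omega$} from boundedness of the derivatives on $\bar{\Omega}$ implicitly uses that points of $\Omega$ can be joined by paths in $\bar{\Omega}$ of length comparable to their distance, which need not hold for an arbitrary open set. It is cleaner to observe that $(n,x)\mapsto\langle n,x\rangle$ is globally Lipschitz on $\S\times\S$ and that $-\log$ is Lipschitz on $[\cos\delta,1]$, so that $c$ is Lipschitz on the whole superlevel set $\{\langle n,x\rangle\geq\cos\delta\}\supset\bar{\Omega}$ as a composition of Lipschitz maps. This is a cosmetic fix, not a gap.
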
 
\begin{remark} The set $\Ri$ is endowed with the order topology.
\end{remark}

The mass transport problem consists in studying

\begin{equation}\label{massdefi}
 \inf_{\Pi \in \G} \int_{\S \times \S} c(n,x)\, d\Pi(n,x).
 \end{equation}

Note that the compactness of $\G$ combined with the (lower semi-)continuity of $c$ yields existence of minimizers in the problem above. These minimizers are called \emph{optimal} (transport) plans. To study the property of the optimal plans  (including the question of uniqueness), Kantorovitch introduced a \emph{dual problem}. Let us define ${\mathcal A}$ as the set of pairs $(\phi,\psi)$ of Lipschitz functions defined on $\S$ that satisfy $\phi(n)+\psi(x) \leq c(n,x)$ for all $x,n \in \S$. Kantorovitch's variational problem consists in studying
\begin{equation}\label{massdefi2}
\sup_{(\phi,\psi) \in \mathcal{A}}  \left\{ \int_{\S} \phi(n) d\sigma(n) + \int_{\S} \psi(x) d\mu(x)\right\}. 
\end{equation}

It is easy to see that the quantity above is always smaller or equal to (\ref{massdefi}). Indeed, given $(\phi,\psi) \in \mathcal{A}$ and $\Pi \in \G$, Property (\ref{planconti}) allows us to write
$$ \int_{\S} \phi(n) \,d\sigma(n)+ \int_{\S}\psi(x) \, d\mu(x)= \int_{\S\times \S} (\phi(n)+\psi(x)) \,d\Pi(n,x) \leq \int_{\S\times \S} c(n,x) \, d\Pi(n,x).$$

It can be proved that $ (\ref{massdefi}) =(\ref{massdefi2})$ whenever the cost function is continuous and nonnegative; this type of result is called Kantorovitch's duality. However, since the cost function assume infinite values, the common value could be infinite. Besides, for some (real-valued) Lipschitz cost functions and when the base space is compact, existence and uniquess of the solution of Kantorovitch's variational problem can be proved. On the contrary, there are cases (involving non real-valued cost function) where solution of Kantorovitch's problem does not exist. We refer to \cite{Vi08} for more on this.

The main result of the paper is a proof of a strong form of the Kantorovitch duality relative to the non real-valued cost function $c$.
\begin{thm}[Strong Kantorovich duality, \cite{Oliker}]\label{transp}
Consider $\mu$ and $\sigma$ as in Alexandrov's theorem. Then, the following equality holds.
\begin{equation}\label{KD5}
\max_{(\phi,\psi) \in\mathcal{A}}  \left\{ \int_{\S} \phi(n) d\sigma(n) + \int_{\S} \psi(x) d\mu(x)\right\} =
\min_{\Pi \in \Gamma (\sigma, \mu)} \int_{\S\times \S} c(n,x) \,d\,\Pi(n,x) <  +\infty.
\end{equation}
Moreover, Kantorovitch's variational problem admits a unique solution $(\phi,\psi)$ (up to replacing $(\phi,\psi)$ by $(\phi+s,\psi-s)$ with $s\in \R$). If $(\phi,\psi) \in \mathcal{A}$ is a solution of the variational problem then, 
 \begin{eqnarray}\label{concave}
  \psi (x) = \min_{n \in \S} c(n,x) - \phi(n) \nonumber \\
  \phi (n) = \min_{x \in \S} c(n,x) - \psi(x) 
  \end{eqnarray}
 for all $n \in \S$ and $\mu$-a.e. $x \in \S$. 
\end{thm}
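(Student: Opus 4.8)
The plan is to separate the content that is general optimal transport theory from the content that genuinely uses the hypothesis on $\mu$, the latter being concentrated in one estimate. First I record the soft facts. As a map with values in $\Ri$ the cost $c$ is nonnegative and lower semicontinuous: at a pair $(n,x)$ with $\langle n,x\rangle\le 0$ one has $c(n,x)=+\infty$ while $c$ tends to $+\infty$ along every sequence converging to $(n,x)$. Hence, $\G$ being compact, the infimum in (\ref{massdefi}) is attained, and Kantorovich duality for nonnegative lower semicontinuous costs (see \cite{Vi08}) identifies it with the supremum in (\ref{massdefi2}); write $V\in\Ri$ for this common value. Writing $\psi^{c}(n):=\min_{x}\bigl(c(n,x)-\psi(x)\bigr)$ and $\phi^{c}(x):=\min_{n}\bigl(c(n,x)-\phi(n)\bigr)$ for the $c$-transforms, it remains to prove: (A) $V<+\infty$; (B) the supremum in (\ref{massdefi2}) is attained, by a pair with $\phi=\psi^{c}$ and $\psi=\phi^{c}$ on all of $\S$ (which in particular yields (\ref{concave})); (C) any maximising pair is unique up to $(\phi,\psi)\mapsto(\phi+s,\psi-s)$, $s\in\R$.

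For (A) I would exhibit a transport plan of finite cost. Fixing a small $\eta>0$, partition $\supp\mu$ into finitely many Borel sets $C_{1},\dots,C_{N}$ of diameter $<\eta$ with $\mu(C_{i})>0$, pick $x_{i}\in C_{i}$, and find pairwise disjoint Borel sets $S_{i}\subseteq B(x_{i},\pi/2-2\eta)$ with $\sigma(S_{i})=\mu(C_{i})$: then $\sum_{i}\mu(C_{i})^{-1}(\mu|_{C_{i}})\otimes(\sigma|_{S_{i}})$ is a plan supported in $\{\langle n,x\rangle\ge\sin\eta\}$, of cost $\le-\log\sin\eta<+\infty$. Existence of the $S_{i}$ is a finite marriage problem whose hypotheses $\mu\bigl(\bigcup_{i\in I}C_{i}\bigr)\le\sigma\bigl(\bigcup_{i\in I}B(x_{i},\pi/2-2\eta)\bigr)$ follow, for $\eta$ small enough, from the strict inequalities (\ref{assump}) together with a compactness argument. (Geometrically this is just the statement that the existence of a finite-cost plan is the solvability condition for the problem.) This step is elementary but a little technical.

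For (B) I would truncate. Put $c_{k}:=\min(c,k)$, a bounded nonnegative continuous cost with transport value $V_{k}$; by monotone convergence and compactness $V_{k}\uparrow V$, which is finite by (A). Classical Kantorovich duality provides for each $k$ a maximising pair $(\phi_{k},\psi_{k})$ which may be taken to be mutual $c_{k}$-transforms, normalised by $\max_{\S}\phi_{k}=0$ (so $0\le\psi_{k}\le k$ and $-k\le\phi_{k}\le 0$); since $c_{k}\le c$ these pairs all belong to $\mathcal{A}$. \emph{The crux of the proof --- and the main obstacle --- is to bound $\|\phi_{k}\|_{\infty}$ and $\|\psi_{k}\|_{\infty}$ by a constant $B$ independent of $k$}; this is the one place where the strict inequality (\ref{assump}) enters essentially, and it is the quantitative version of the phenomenon already underlying (A). The mechanism becomes transparent after the substitution $u=e^{-\phi}$, $v=e^{-\psi}$, under which the constraint $\phi(n)+\psi(x)\le c(n,x)$ reads $u(n)v(x)\ge\langle n,x\rangle$, the dual objective becomes $-\int\log u\,d\sigma-\int\log v\,d\mu$, and the smallest admissible $u$ for a fixed $v$ is the support function of the convex body $\overline{\mathrm{conv}}\{x/v(x):x\in\S\}$: a uniform bound on the potentials amounts to these bodies (which automatically contain the unit ball) remaining uniformly bounded, and (\ref{assump}) --- through a compactness argument extracting from its strict inequalities a quantitative margin that forbids $\mu$ from almost concentrating in a closed hemisphere --- is exactly what keeps them, equivalently the reconstructed convex sets, from degenerating. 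Granting the bound $B$, each $c_{k}$-concave potential is automatically Lipschitz with a constant depending only on $B$: the minimiser in $\phi_{k}(n)=\min_{x}\bigl(c_{k}(n,x)-\psi_{k}(x)\bigr)$ has $c(n,x)\le 2B$, i.e. lies in the region $\{\langle\cdot,\cdot\rangle\ge e^{-2B}\}$ on which (for $k\ge 2B$) $c=c_{k}$ is uniformly Lipschitz, and symmetrically for $\psi_{k}$. Arzel\`a--Ascoli then extracts a subsequence converging uniformly to a pair $(\phi,\psi)$ of Lipschitz functions; passing to the limit in the constraint gives $(\phi,\psi)\in\mathcal{A}$, in $\int\phi_{k}\,d\sigma+\int\psi_{k}\,d\mu=V_{k}$ gives optimality, and in the identities $\phi_{k}=\psi_{k}^{c_{k}}$, $\psi_{k}=\phi_{k}^{c_{k}}$ gives $\phi=\psi^{c}$ and $\psi=\phi^{c}$ on all of $\S$, hence (\ref{concave}). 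That an arbitrary maximising pair satisfies (\ref{concave}) in the stated sense then follows from complementary slackness: $c-\phi-\psi$ is nonnegative, lower semicontinuous, and has vanishing integral against any optimal plan $\Pi$, so it vanishes on $\supp\Pi$.

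For (C), let $(\phi_{1},\psi_{1})$ and $(\phi_{2},\psi_{2})$ be maximising pairs; replacing each by its iterated $c$-transform --- which leaves both integrals unchanged and modifies the functions only on a $\sigma$- resp. $\mu$-null set --- I may assume both are mutual $c$-transforms, hence $c$-concave, hence (since $c(\cdot,x)$ is $C^{2}$ uniformly in $x$ on compact subsets of $\{c<+\infty\}$) locally semiconcave, so locally Lipschitz and differentiable $\sigma$-a.e. Fix an optimal plan $\Pi$; its cost being finite, the disintegration of $\Pi$ over $\sigma$ is carried for $\sigma$-a.e. $n$ by $\{x:\langle n,x\rangle>0\}$, and the lower semicontinuity of the nonnegative function $c-\phi_{i}-\psi_{i}$ (of vanishing $\Pi$-integral) puts $\supp\Pi$ inside the contact set of each $(\phi_{i},\psi_{i})$. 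Thus for $\sigma$-a.e. $n$ one can choose $x_{n}$ with $(n,x_{n})\in\supp\Pi$ and $\langle n,x_{n}\rangle>0$; then $m\mapsto c(m,x_{n})$ is smooth near $n$ and $m\mapsto\phi_{i}(m)-c(m,x_{n})$ has a maximum at $m=n$, whence at points of differentiability $\nabla\phi_{i}(n)=\nabla_{n}c(n,x_{n})$ --- the \emph{same} vector for $i=1,2$. Therefore $\nabla(\phi_{1}-\phi_{2})=0$ $\sigma$-a.e.; $\S$ being connected and $\phi_{1}-\phi_{2}$ Lipschitz, $\phi_{1}-\phi_{2}\equiv s$ for a constant $s$; and since the second projection of $\supp\Pi$ carries $\mu$ while $\psi_{i}=\phi_{i}^{c}$, this forces $\psi_{1}-\psi_{2}\equiv-s$ on $\S$. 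This proves (C). In sum, the only delicate point in the whole scheme is the uniform control of the Kantorovich potentials in (B) --- i.e. the non-degeneracy encoded by (\ref{assump}); everything else is general theory together with careful but routine verifications.
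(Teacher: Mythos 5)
Your architecture (truncate $c_k=\min(c,k)$, apply classical duality, obtain a uniform bound on the truncated potentials, pass to the limit by Arzel\`a--Ascoli; marriage-type construction for finiteness; gradient argument for uniqueness) is coherent and genuinely different from the paper's route, which never truncates: there the potential is built directly from the closed union $\Gamma$ of supports of optimal plans by a Rockafellar--R\"uschendorf formula, Alexandrov's condition entering through a finite-chain argument (the sets $A_i$ whose $\pi/2$-neighbourhoods exhaust $\S$ in finitely many steps thanks to the uniform margin of Proposition \ref{prop42}), and Lipschitz regularity is then automatic from Proposition \ref{elip}. However, your step (B) has a genuine gap exactly at the point you yourself flag as the crux: the uniform bound $\|\phi_k\|_{\infty},\|\psi_k\|_{\infty}\le B$ is asserted and motivated by the heuristic that the associated convex bodies cannot degenerate, but never proved. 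This is not a routine verification. Under your normalization $\max_{\S}\phi_k=0$ the bound amounts to an a priori control of the circumscribed radius by the inscribed radius about the origin, i.e.\ precisely Treibergs' estimate; the paper's remark after Proposition \ref{rein3} recalls that such a bound is \emph{false} under the margin (\ref{rein}) produced by compactness over convex sets, and requires the stronger uniform inequality (\ref{rein2}) over all compact sets, whose proof is the whole of Proposition \ref{rein3}. Even granting (\ref{rein2}), you would still have to convert it into an $L^{\infty}$ bound for the $c_k$-optimal potentials, which are not literally support and radial functions of convex bodies since the cost is truncated. None of this is supplied, so the central quantitative step of your proof is missing, and "a compactness argument extracting a quantitative margin" does not substitute for it.

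Two smaller points. In (A), the existence of pairwise disjoint $S_i\subset B(x_i,\pi/2-2\eta)$ with $\sigma(S_i)=\mu(C_i)$ is not the finite Hall theorem, since the $\sigma$-side is non-atomic: you need either a discretization of $\sigma$ as in the paper's proof of Theorem \ref{finite} (cells of equal $\sigma$-mass combined with rational weights, then the finite Marriage lemma) or a continuous marriage/Strassen-type result; moreover the Hall hypotheses $\mu\big(\bigcup_{i\in I}C_i\big)\le\sigma\big(\bigcup_{i\in I}B(x_i,\pi/2-2\eta)\big)$ concern finite unions, which are compact but in general not convex, so again they require the reinforcement (\ref{rein2}) of Proposition \ref{rein3} rather than (\ref{assump}) itself. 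By contrast, your part (C) and the complementary-slackness derivation of (\ref{concave}) are correct and essentially identical to the paper's Section 4.3.
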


The proof of Theorem \ref{transp} is in two steps. First, we show that the quantities in (\ref{KD5}) are finite. More precisely we show a result which, we believe, is of independent interest (in connection with Theorem \ref{BMCC} for instance).

\begin{thm}\label{finite} Let $\mu$ be a Borel probability measure on $\S$.  Then, $\mu$ satisfies Alexandrov's condition (\ref{assump}) if and only if there exists $\Pi \in \Gamma(\sigma,\mu)$ such that 
$$  c \in L^{\infty} (\Pi).$$
\end{thm}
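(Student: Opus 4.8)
The plan is to recast the condition $c\in L^\infty(\Pi)$ as a support constraint on $\Pi$, treat the easy (``only if'') implication directly, and derive the hard (``if'') one from Strassen's feasibility theorem combined with a compactness argument and the polar transform.

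Since on $\{d<\pi/2\}$ the cost $c(n,x)=-\log\cos d(n,x)$ is a continuous increasing function of $d(n,x)$ tending to $+\infty$ as $d(n,x)\to\pi/2$, for $\Pi\in\Gamma(\sigma,\mu)$ one has $c\in L^\infty(\Pi)$ if and only if $\Pi(R_r)=1$ for some $r\in[0,\pi/2)$, where $R_r:=\{(n,x)\in\S\times\S:d(n,x)\le r\}$ (take $r=\arccos(e^{-\|c\|_{L^\infty(\Pi)}})$). For the reverse implication, assume such a $\Pi$ and $r<\pi/2$ exist and let $\omega\subsetneq\S$ be non-empty and convex. The marginal identities (\ref{planmargi}) give $\mu(\omega)=\Pi(\S\times\omega)\le\Pi(\{(n,x):d(n,\omega)\le r\})=\sigma(\{n:d(n,\omega)\le r\})$. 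The closed set $\{n:d(n,\omega)\le r\}$ is contained in the open set $\omega_{\pi/2}$, and the difference $\{n:r<d(n,\omega)<\pi/2\}$ is non-empty: being proper and convex, $\omega$ lies in a closed hemisphere $\{n:\langle n,q\rangle\le 0\}$, so $d(q,\omega)\ge\pi/2$, and the intermediate value theorem applied to the continuous map $n\mapsto d(n,\omega)$ yields a point, hence an open neighbourhood of it, on which this map takes values strictly between $r$ and $\pi/2$. As $\sigma$ gives positive mass to every non-empty open set, $\mu(\omega)<\sigma(\omega_{\pi/2})$, i.e.\ (\ref{assump}).

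For the forward implication I would invoke Strassen's theorem: for a fixed $r$ there exists $\Pi\in\Gamma(\sigma,\mu)$ with $\Pi(R_r)=1$ if and only if $\mu(B)\le\sigma(\{n:d(n,B)\le r\})$ for every closed $B\subseteq\S$. It thus suffices to exhibit one $r<\pi/2$ for which this holds for all closed $B$. Suppose not; then for each $k$ there is a non-empty closed $B_k$ with $\mu(B_k)>\sigma(\{n:d(n,B_k)\le\pi/2-1/k\})$. Using compactness of the space of non-empty closed subsets of $\S$ for the Hausdorff metric, pass to a subsequence with $B_k\to C$ (closed, non-empty). A routine estimate — $B_k$ is eventually inside every open neighbourhood of $C$, while every point with $d(\cdot,C)<\pi/2-\varepsilon$ is eventually within distance $\pi/2-1/k$ of $B_k$ — shows $\limsup_k\mu(B_k)\le\mu(C)$ and $\liminf_k\sigma(\{n:d(n,B_k)\le\pi/2-1/k\})\ge\sigma(C_{\pi/2})$, and hence $\mu(C)\ge\sigma(C_{\pi/2})$.

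The last step is to turn this limiting inequality into a violation of (\ref{assump}); this is the part I expect to be delicate. Set $C^\circ:=\{n\in\S:\langle n,z\rangle\le 0\ \text{for all}\ z\in C\}$. Since $d(n,C)\ge\pi/2$ is equivalent to $n\in C^\circ$, one has $\S\setminus C_{\pi/2}=C^\circ$, so $\mu(C)\ge\sigma(C_{\pi/2})$ becomes $\mu(C)+\sigma(C^\circ)\ge 1$. If $C$ lies in no closed hemisphere, then $C^\circ=\emptyset$ and $\mu(C)\ge 1$; but ``lying in no closed hemisphere'' is equivalent to having covering radius $\max_{y\in\S}d(y,C)<\pi/2$, which is an open condition for the Hausdorff metric, so the $B_k$ have covering radius bounded away from $\pi/2$ for large $k$, forcing $\{n:d(n,B_k)\le\pi/2-1/k\}=\S$ and the impossible $\mu(B_k)>1$. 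Otherwise $C$ lies in a closed hemisphere and we take $\omega:=C^{\circ\circ}$, the intersection of all closed hemispheres containing $C$: a non-empty closed convex proper subset of $\S$ with $\omega^\circ=C^{\circ\circ\circ}=C^\circ$. Then $C\subseteq\omega$ and $\S\setminus\omega_{\pi/2}=\omega^\circ=C^\circ$ give $\mu(\omega)\ge\mu(C)\ge 1-\sigma(C^\circ)=\sigma(\omega_{\pi/2})$, contradicting (\ref{assump}) for $\omega$; in the degenerate cases, where $\omega^\circ$ has empty interior (e.g.\ $\omega$ a hemisphere or a great subsphere), one has $\sigma(\omega_{\pi/2})=1$ and Remark \ref{remtriv} already rules out $\mu(\omega)=1$. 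This produces the required $r<\pi/2$, hence a plan $\Pi\in\Gamma(\sigma,\mu)$ with $c\in L^\infty(\Pi)$. The main obstacle is precisely this conversion: Strassen's criterion quantifies over all closed sets and a priori only forbids radii close to $\pi/2$, so one genuinely needs both the compactness extraction of a single limiting offending set and the bipolar operation $C\mapsto C^{\circ\circ}$ to land exactly on the negation of Alexandrov's convex inequality, together with the separate covering-radius argument for sets not contained in a closed hemisphere.
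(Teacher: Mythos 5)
Your proof is correct, and it diverges from the paper's in one essential place. The reduction of $c\in L^{\infty}(\Pi)$ to the support condition $\Pi(R_r)=1$ and the easy implication coincide with the paper's Remark \ref{rem42} (you are in fact more careful than the paper about why the resulting inequality is \emph{strict}). Your compactness-and-bipolar argument establishing a uniform radius $r=\pi/2-\alpha$ such that $\mu(B)\le\sigma(\{n:d(n,B)\le r\})$ for every closed $B$ is essentially the paper's Proposition \ref{rein3}: the same Hausdorff-compactness extraction combined with the lower semicontinuity of Lemma \ref{conti2} and the upper semicontinuity of $V\mapsto\mu(V)$; your dichotomy on the limit set $C$ (covering radius $<\pi/2$ versus passage to the bipolar $C^{\circ\circ}$, which is the convex set to which (\ref{assump}) is applied) is a slightly more direct route to the contradiction than the paper's, which instead identifies the limit set with $\supp(\mu)$ and concludes with a quantitative estimate. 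The genuine difference is the last step: you invoke Strassen's theorem as a black box to convert the uniform inequality into a plan concentrated on $R_{\pi/2-\alpha}$, whereas the paper builds the plan by hand --- mollification of $\mu$, discretization into atoms of rational mass, an equal-measure partition of $\S$ into small cells, the Marriage Lemma, and a Banach--Alaoglu limit. The two are morally equivalent (the paper's construction is in effect a proof of the needed case of Strassen's theorem), so your version is shorter and cleaner, at the price of an external citation that the paper deliberately avoids in order to stay self-contained.
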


Then, we show the existence of maximizers of Kantorovitch's variational problem as well as their uniqueness up to constants. Note that as a by-product of our proof, we get an analogue of the Brenier-McCann theorem for the cost $c$ stated below.

\begin{thm}\label{BMCC} Let $f\sigma$ and $\mu$ two probability measures on $\S$ such that there exists $\Pi \in \Gamma(f\sigma,\mu)$ for which $c\in L^{\infty}(\Pi)$. Then, the mass transport problem
\begin{equation}\label{min}
\min_{\Pi \in \Gamma (f\sigma, \mu)} \int_{\S\times \S} c(n,x) \,d\,\Pi(n,x) <  +\infty
\end{equation}
admits a unique solution $\Pi_0$. Moreover, $\Pi_0= (Id,T)_{\sharp} f\sigma$ where for $\sigma$-a.e. $n \in \S$,
$$ T(n) = \mbox{\rm exp}_n \left( \frac{-\arctan |\nabla \phi(n)|}{|\nabla \phi(n)|}\nabla \phi(n) \right)$$
being $\phi$ a Lipschitz $c$-concave function.
\end{thm}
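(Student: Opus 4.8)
The plan is to run the classical Brenier--McCann scheme for the cost $c$; the two non-routine points are that $c$ attains the value $+\infty$ (handled through the standing hypothesis that $c$ is bounded on some plan, together with the blow-up of $c$ at the cut locus $\{d(n,x)=\pi/2\}$) and the inversion of the first-order optimality condition, which is where the precise form $c(n,x)=-\log\cos d(n,x)$ enters. First I would observe that a minimiser in (\ref{min}) exists: $\Gamma(f\sigma,\mu)$ is weakly compact and $c\ge 0$ is lower semicontinuous, so $\Pi\mapsto\int c\,d\Pi$ is lower semicontinuous and attains its infimum, which is finite by the assumed plan with $c\in L^{\infty}$. Fix an optimal plan $\Pi_0$; its cost being finite, $\Pi_0$ gives no mass to $\{c=+\infty\}$ and is thus concentrated on $\{(n,x):d(n,x)<\pi/2\}$, and --- by the standard theory of optimal transport for lower semicontinuous costs of finite optimal value (cf.\ \cite{Vi08}) --- on a $c$-cyclically monotone Borel set $\Gamma$, which I take inside $\{c<+\infty\}$. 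R\"uschendorf's chain construction then produces a $c$-concave function $\phi$ with $\Gamma\subset\partial^{c}\phi$; concretely, for every $(n,x)\in\Gamma$ the function $n'\mapsto c(n',x)-\phi(n')$ attains its minimum over $\S$ at $n'=n$. The delicate part of this step --- and the one I expect to be the main obstacle --- is to check that $\phi$ is finite and Lipschitz on a neighbourhood of the first marginal $\pi_{1}(\Gamma)$: the mechanism is that $-\log\cos d(n,x)\to+\infty$ as $d(n,x)\to\pi/2$, which forces the infima in the $c$-transform and in the chain formula for $\phi$ to be attained uniformly away from the cut locus, on a region where $c$ is smooth with bounded first derivatives; the hypothesis that some plan has $c\in L^{\infty}$ is what keeps $\phi$ from being $+\infty$ on $\pi_{1}(\Gamma)$ and, with a further argument, makes the pair $(\phi,\phi^{c})$ bounded, so that $\Gamma$ actually lies in a compact subset of $\{c<+\infty\}$.

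Next comes the differentiation. Since $\phi$ is Lipschitz near $\pi_{1}(\Gamma)$, Rademacher's theorem together with $f\sigma\ll\sigma$ shows that $\phi$ is differentiable at $f\sigma$-a.e.\ $n$, and $f\sigma(\pi_{1}(\Gamma))=1$. Fix such an $n$ and any $x$ with $(n,x)\in\Gamma$: since $n'\mapsto c(n',x)-\phi(n')$ is minimised at $n'=n$ and $c(\cdot,x)$ is smooth near $n$ (because $d(n,x)<\pi/2$), the first-order condition gives $\nabla\phi(n)=\nabla_{n}c(n,x)$. Writing $\theta=d(n,x)\in[0,\pi/2)$ and $x=\exp_{n}(\theta v)$ with $|v|=1$, a direct computation gives $\nabla_{n}c(n,x)=-\tan\theta\,v$; hence $|\nabla\phi(n)|=\tan\theta$, so $\theta=\arctan |\nabla\phi(n)|$ and $v=-\nabla\phi(n)/|\nabla\phi(n)|$, and therefore
\[ x \;=\; \exp_{n}\!\left( \frac{-\arctan |\nabla\phi(n)|}{|\nabla\phi(n)|}\,\nabla\phi(n)\right)\;=:\;T(n), \]
with the convention $T(n)=n$ when $\nabla\phi(n)=0$ (the case $\theta=0$). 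Thus for $f\sigma$-a.e.\ $n$ the set $\{x:(n,x)\in\Gamma\}$ reduces to the single point $T(n)$, and $T$ is measurable because $\nabla\phi$ is. Disintegrating $\Pi_0=\int_{\S}\delta_{n}\otimes\Pi_0^{n}\,df\sigma(n)$ along its first marginal, the equality $\Pi_0(\Gamma)=1$ forces $\Pi_0^{n}$ to be concentrated on $\{x:(n,x)\in\Gamma\}=\{T(n)\}$ for $f\sigma$-a.e.\ $n$, i.e.\ $\Pi_0^{n}=\delta_{T(n)}$; hence $\Pi_0=(Id,T)_{\sharp}f\sigma$ (and in particular $T_{\sharp}f\sigma=\mu$).

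Finally, for uniqueness, suppose $\Pi_0$ and $\Pi_1$ are both optimal. By linearity of $\Pi\mapsto\int c\,d\Pi$ and convexity of $\Gamma(f\sigma,\mu)$, the plan $\frac{1}{2}(\Pi_0+\Pi_1)$ is optimal as well; applying the previous two steps to it yields a $c$-concave $\bar\phi$ and a measurable map $\bar T$ with $\frac{1}{2}(\Pi_0+\Pi_1)=(Id,\bar T)_{\sharp}f\sigma$. Since $\Pi_0\ll\frac{1}{2}(\Pi_0+\Pi_1)$ and likewise for $\Pi_1$, both plans are concentrated on the graph of $\bar T$, whence $\Pi_0=\Pi_1=(Id,\bar T)_{\sharp}f\sigma$. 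This yields uniqueness of the optimal plan and its stated form, with $\phi=\bar\phi$ the corresponding Lipschitz $c$-concave potential.
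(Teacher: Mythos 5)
Your skeleton (existence of a minimizer by compactness and lower semicontinuity, $c$-cyclical monotonicity of the support, a R\"uschendorf-type chain construction of a potential, differentiation of the potential to invert the first-order condition, uniqueness via the half-sum plan) is the same as the paper's, which proves Theorem \ref{BMCC} by rerunning Sections 4.2--4.3 with $f\sigma$ in place of $\sigma$ (see Remark \ref{rem}). But the step you yourself flag as ``the main obstacle'' is exactly where the content of the paper lies, and your proposed mechanism does not prove it. For the chain construction, $\varphi(n)<+\infty$ requires the existence of a \emph{finite chain} $(n_1,x_1),\dots,(n_k,x_k)\in\Gamma$ joining the base pair $(n_0,x_0)$ to $n$ with all cross costs $c(n_{i+1},x_i)$ finite, and $\varphi>-\infty$ on $p_n(\Gamma)$ requires chains in the reverse direction; when $c\equiv+\infty$ outside $\{d<\pi/2\}$ this is a nontrivial connectivity property of $\Gamma$ which fails for general $c$-cyclically monotone sets. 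The blow-up of $c$ near the cut locus does not ``force the infima to be attained away from the cut locus'' before one knows $\varphi$ is finite, and the hypothesis $c\in L^{\infty}(\Pi)$ for one plan does not directly produce such chains. (A smaller point in the same spirit: $c$-cyclical monotonicity of supports of optimal plans is not the off-the-shelf statement for costs attaining $+\infty$; the paper invokes continuity of $c$ with values in $\R^+\cup\{+\infty\}$ and \cite{GMcC}.)

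What the paper actually does at this point is the following, and something equivalent is needed to close your gap. The hypothesis $c\in L^{\infty}(\Pi)$ is converted, via Remark \ref{rem42}, into the measure inequality $\mu(U)\leq (f\sigma)(\overline{U_{\pi/2-\alpha}})$ for all Borel $U$, i.e.\ an Alexandrov-type condition for the pair $(f\sigma,\mu)$; the Hausdorff-compactness results of Section 3 (Propositions \ref{prop42} and \ref{rein3}, run with $f\sigma$ in place of $\sigma$) upgrade this to a uniform margin $\epsilon>0$ as in (\ref{rein}). This margin drives the connectivity argument inside the proof of Theorem \ref{main}: with $A_0=\{x_0\}$ and $A_{i+1}=p_x\bigl(p_n^{-1}((A_i)_{\pi/2})\cap\Gamma\bigr)$ one gets $\mu(A_{i+1})\geq (f\sigma)((A_i)_{\pi/2})\geq \mu(A_i)+\epsilon$ unless $(A_i)_{\pi/2}=\S$, so after finitely many steps every point is reachable by a finite chain; only then is $\varphi$ finite, and the Lipschitz regularity is obtained not from smoothness of $c$ alone but from Proposition \ref{elip}, after which one must replace $\varphi$ by the genuinely $c$-concave $\phi=\varphi^{cc}$ and check that $\Gamma\subset\partial_c\phi$ survives this replacement. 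Until you supply an argument of this kind, the central step of your proof is a gap rather than a proof. The remaining steps are correct and essentially the paper's: the computation $\nabla\phi(n)=\nabla_n c(n,x)=-\tan\bigl(d(n,x)\bigr)v$ and the resulting formula for $T$ match Section 4.3, and your half-sum uniqueness trick is a fine alternative to the paper's device of applying Theorem \ref{main} to the closed union of the supports of \emph{all} optimal plans, which yields uniqueness of the potential and of the plan simultaneously.
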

 We refer to Definition \ref{cost} for the definition of $c$-concave function. We don't know whether the assumption that $c\in L^{\infty}(\Pi)$ for a well-chosen transport plan $\Pi$ can be weakened to (\ref{min}). The details of the proof are left to the reader (however a proof is sketched in Remark \ref{rem}).

%%%%%%%%%%%%%%%%%%%%%%%%%%%%%%%%%%%%%%%%%%%%%%%%%%%%%%%%%%%%
 \subsection{Comments, related results, and organization of the paper.}\label{comment}
  
  Our motivation to study this problem comes from a paper by Oliker \cite{Oliker}. In this paper, Oliker proves Alexandrov's theorem through the study of Kantorovich's variational problem. Moreover his proof, as Alexandrov's one, consists in establishing the result for convex polyhedra first and then "passing to the limit". This requires some fine estimates (see the proofs of \cite[Theorems 6 and 7]{Oliker}). This paper led us to the question of whether it was possible to deduce Alexandrov's theorem from optimal mass transport techniques without using convex polyhedra. Roughly speaking, the arguments relying on the theory of convex polyhedra are replaced in our proof by elementary compactness results relative to the Hausdorff metric. Moreover, taken for granted that Kantorovitch's variational problem can be solved, optimal mass transport gives a very simple argument to prove that the given measure is indeed the Gauss curvature measure of the underlying convex body. We also mention that the paper is self-contained (up to elementary properties of Hausdorff metric and a lemma on c-cyclical monotonicity whose proof is less than a page) and does not require any specific knowledge in optimal mass transport.

Under the assumption that the Gauss curvature measure is absolutely continuous, the regularity of the convex set has been studied by Pogorelov in two dimensions \cite{pogorelov} and by Oliker \cite{oliker2} in higher dimensions. Let us also mention a paper by Treibergs \cite{treibergs} where the author proves {\it a priori} bounds for the ratio of circumscribed and inscribed radii of the convex body depending on the curvature measure. A study of other curvature measures in the smooth case has been carried out  in \cite{guan} (see also the references therein). 
  
To conclude, let us also mention that the same approach has been applied in \cite{bert} where we prove an analogue of Alexandrov's theorem for compact hyperbolic orbifolds. However in the hyperbolic setting the relative cost function is real-valued and standard results in optimal mass transport can be applied. Adapting known results in optimal mass transport to the non-standard cost function involved in Alexandrov's problem is the main achievement of this paper. Building on the results of this paper, we consider in a joint work in progress with Castillon, the problem of prescribing the Gauss curvature of convex bodies in the hyperbolic space. 

In Section 2, we infer Theorem \ref{Imain1} from Theorem \ref{transp}. In Section 3, we use the Hausdorff metric to establish two reinforcements of Assumption (\ref{assump}). In the last section, we prove Theorem \ref{finite} and Theorem \ref{transp}.

% the Minkowski problem which is, somehow, dual to this problem. In the polyhedron case, it consists in prescribing the normal vectors and the area of each face; in the smooth case, it is the problem of finding which functions can be realised as the Gauss curvature of a convex set (see for instance \cite{AlexCP,schneider}). It is worth noticing that this problem can also be treated by methods involving or related to optimal mass transport \cite{McCann95,Carlier}. A study of other curvature measures in the smooth case has been carried out  in \cite{guan} (see also the references therein). 

  \vspace*{0.2cm}
  
 %%%%%%%%%%%%%%%%%%%%%%%%%%%%%%%%%%%%%%%%%%%%%%%%%%%%%%

 \section{Proof of Alexandrov's theorem}\label{esubE}
 
 In this part we prove Alexandrov's theorem by means of Theorem \ref{transp}. We start with some preliminary remarks on support functions. Let us recall that the support function $h$ of a convex body $\Omega$ is defined by the formula
 
\begin{equation}\label{support}
h(n) = \sup_{x \in \S} \left\{\rho(x) \langle x,n\rangle  \right\}
\end{equation}

Note that $h(n)= \rho(x) \langle x,n\rangle $ if and only if the hyperplane orthogonal to $n$ through $\stackrel{\rightarrow}{\rho}(x)$ supports the convex $\Omega$. In other words, this equality amounts to  
\begin{equation}\label{me23}
n \in {\mathcal G}\circ \vec{\rho}(x).
\end{equation}

More generally, we can consider the radial and support functions of star-shaped sets with respect to $0$. More precisely, we denote by $ {\mathcal E} $ the set of non-empty subsets of $ \R^{m+1}$ that are closed, bounded and star-shaped with respect to $0$ and whose radial function is positive and continuous.

It is worth mentioning that the support function $h_F$ of any set $F \in {\mathcal E}$ is related to the radial function of its polar set $F^{\circ}$ in the following way:
\begin{equation}\label{truc2}
 \frac{1}{h_F}  = \rho_{F^{\circ}}
\end{equation} 
where $F^{\circ}= \{n \in \R^{m+1}; \,\forall x \in F\; \langle x,n\rangle \leq 1 \} $.

Let us recall that $F^{\circ}$ is a convex set belonging to ${\mathcal E}$. Moreover by definition of a polar set, it is always true that $F^{\circ \circ} \supset F$ and the equality holds if and only $F \in {\mathcal E}$ is a convex set (we refer to \cite{schneider} for more on polar sets, including a proof of the previous equivalence, see Theorem 1.6.1). In terms of radial functions, this amounts to
\begin{equation}\label{truc}
\rho_{F^{\circ \circ}} \geq \rho_F
\end{equation}
and (using that $F$ is star-shaped) that the equality holds if and only if $F \in {\mathcal E}$ is convex.

To summarize, a set $\C$ is a convex body with $0$ in its interior if and only if its support and radial functions are related by (\ref{support}) and 

\begin{equation}\label{radial} 
\rho(x) =\rho_{\C^{\circ \circ}}(x)=  \frac{1}{h_{\C^{\circ}}}(x) =\frac{1}{\sup_{n \in \S}\frac{\langle x,n\rangle}{h(n)}}=\inf_{n \in \S; \langle x,n\rangle >0} \left\{\frac{h(n)}{\langle x,n\rangle}\right\}  
\end{equation}

Now, if we set $\phi = \ln (1/h)$ and $\psi = \ln(\rho)$, these functions are well-defined Lipschitz functions. Moreover (\ref{support}) and (\ref{radial}) can be rephrased as 

 \begin{eqnarray*}
  \psi (x) = \min_{n \in \S} c(n,x) - \phi(n) \nonumber \\
  \phi (n) = \min_{x \in \S} c(n,x) - \psi(x) 
  \end{eqnarray*}
 for all $x,n \in \S$ and $c$ is the cost function defined in (\ref{costdefi}). 
 
(Note that to get the formula above, we use the fact that the extremums are realized by $x,n$ such that $\langle n,x \rangle >0$.) In other terms, $\phi$ and $\psi$ are c-concave functions according to the theory of optimal mass transport. Conversely, the datum of two Lipschitz c-concave functions $\phi,\psi$ obviously determines a unique convex body.

To complete the proof of the existence part of Alexandrov's theorem, it remains to show that, given a solution $(\phi,\psi)$ of Kantorovitch's variational problem, the Gauss curvature measure of the convex body determined by  $(\phi,\psi)$ is indeed $\mu$. To this aim, we select $\Pi_0$ an optimal transport plan and notice that the equality (\ref{KD5}) reads
$$ \int_{\S} \phi(n) d\sigma(n) + \int_{\S} \psi(x) d\mu(x)= \int_{\S\times \S} c(n,x) \,d\,\Pi_0(n,x) $$
which, using (\ref{planconti}), can be reformulated as
$$ \int_{\S\times \S} (c(n,x)-\phi(n)-\psi(x))\, d\Pi_0(n,x)=0.$$
Equivalently,
\begin{equation}\label{me60}
 \Pi_0( \{(n,x) \in (\S)^2; \phi(n) + \psi(x)= c(n,x) \})=1
 \end{equation}
since $\phi(n) + \psi(x) \leq c(n,x)$ for all $n,x \in \S$. Now, the discussion above allows us to rewrite (\ref{me60}) as
$$ \Pi_0( \{(n,x) \in (\S)^2; n \in \mathcal{G}(\vec{\rho}(x)) \})=1.$$

As a consequence, for any Borel set $U \subset \S$, it holds
\begin{eqnarray*}
\mu(U) &=& \Pi_0 (\S\times U)\\
             &=& \Pi_0(\S \times U\cap \{(n,x) \in (\S)^2; \phi(n) + \psi(x)= c(n,x) \})\\
             &=& \Pi_0(\S \times U\cap \{(n,x) \in (\S)^2; n \in \mathcal{G} (\vec{\rho}(x))\})\\
             &=& \Pi_0( \mathcal{G} \circ \vec{\rho} (U) \times U \cap  \{(n,x); n \in \mathcal{G}(\vec{\rho}(x))\})\\
             &=& \Pi_0( \mathcal{G}\circ \vec{\rho}(U) \times \S  \cap   \{(n,x); n \in  \mathcal{G}(\vec{\rho}(x))\})\\
            &=&  \Pi_0( \mathcal{G}\circ \vec{\rho}(U) \times \S ) \\
             &=& \sigma ( \mathcal{G}\circ \vec{\rho}(U))
\end{eqnarray*}

where we used in line 5 
\begin{multline*}
    \mathcal{G}\circ \vec{\rho}(U) \times  U^c\, \cap  \{(n,x) \in (\S)^2; n \in \mathcal{G}(\vec{\rho}(x)))\} \subset 
 \{(n,x)  \in (\S)^2 ; \exists\, x'\neq x, n \in   \mathcal{G}(\vec{\rho}(x))\cap   \mathcal{G}(\vec{\rho}(x'))\}
 \end{multline*}
 which yields
 \begin{multline*}
 \Pi_0(  \mathcal{G}\circ \vec{\rho}(U) \times  U^c \cap  \{(n,x) \in (\S)^2; n \in \mathcal{G}(\vec{\rho}(x)))\}) \leq \\
 \sigma( \{n  \in \S ; \exists\, x'\neq x, n \in   \mathcal{G}(\vec{\rho}(x))\cap   \mathcal{G}(\vec{\rho}(x'))\})=0
 \end{multline*}
according to (\ref{me32}). 

It remains to prove the uniqueness part. If two distinct convex bodies $\C_1,\C_2$  have the same curvature measure then their radial and support functions give rise to two distinct solutions of Kantorovitch's variational problem through the transformation $\phi_i = \ln (1/h_i)$ and $\psi_i= \ln (\rho_i)$ ($i \in \{1,2\}$). Therefore,  
Theorem \ref{transp} yields that $\phi_1-\phi_2= -(\psi_1-\psi_2)$ is constant which means that $\C_1$ and $\C_2$ are dilations of each other about the origin.

%%%%%%%%%%%%%%%%%%%%%%%%%%%%%%%%%%%%%%%%%%%%%%%%%%%%%%%%%%%%%%%%%%%%%
\section{Hausdorff convergence and self-improvement of Alexandrov's assumption (\ref{assump})}

This part is devoted to technical results that will be used in the subsequent parts to prove Theorem \ref{transp}.

In this section we recall some standard compactness results on the space of closed (or convex) sets of a given compact metric space. We restrict our attention to the case where the metric space is the round sphere. The topology on the closed sets is induced by the Hausdorff metric whose definition is recalled below. Then, we prove some continuity results on functionals defined on these spaces. Finally, we use these results to prove reinforcements of Alexandrov's assumption (\ref{assump}).

\subsection{Continuity and compactness results for the Hausdorff topology}

\begin{defi}[Hausdorff metric]Let $\F$ (respectively $\mathcal{C}$) be the set of {\emph nonempty} compact sets (respectively compact convex sets) in $\S$. For $V_1,V_2 \in \F$, the Hausdorff distance between these sets is defined by the formula
$$d_H (V_1,V_2) = \min \left\{ \epsilon; V_1 \subset \bar{(V_2)_{\epsilon}} \mbox{ and } V_2 \subset \bar{(V_1)_{\epsilon}} \right\}$$
where $V_{\epsilon} = \{y \in \S; d(y,V) < \epsilon\}$.
\end{defi}

\begin{prop} \label{compact}The set $\F$ endowed with $d_H$ is a compact metric space. Moreover, $\C$ is a closed subset of $\F$ for this topology.
\end{prop}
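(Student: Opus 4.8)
The plan is to prove that $(\F, d_H)$ is a compact metric space and that $\C$ is closed in it. First I would verify that $d_H$ is indeed a metric on $\F$: nonnegativity and symmetry are immediate from the definition; the triangle inequality follows from the elementary fact that $(V)_\epsilon$ enlarges nicely, i.e. $\overline{(V_\epsilon)_{\epsilon'}} \subset \overline{V_{\epsilon+\epsilon'}}$ on the metric space $\S$; and $d_H(V_1,V_2) = 0$ forces $V_1 \subset \bar{V_2} = V_2$ and vice versa since both sets are closed (here compactness of $\S$ guarantees the minimum defining $d_H$ is attained, so $d_H$ is finite and well-defined).

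For compactness, since $(\F, d_H)$ is a metric space it suffices to prove sequential compactness. Given a sequence $(V_k)_{k\in\N}$ in $\F$, I would use a standard diagonal/subsequence argument: fix a countable dense subset $\{p_j\}$ of $\S$; for each $j$ the sequence of distances $d(p_j, V_k)$ lies in the compact interval $[0,\operatorname{diam}\S]$, so by a diagonal extraction pass to a subsequence along which $d(p_j,V_k)$ converges for every $j$. One then shows the functions $x \mapsto d(x,V_k)$ (which are $1$-Lipschitz, hence equicontinuous) converge uniformly on $\S$ to a $1$-Lipschitz function $g$, and sets $V := g^{-1}(0)$. It remains to check $V$ is nonempty (it contains any limit point of points $x_k \in V_k$, which exist by compactness of $\S$) and that $V_k \to V$ in $d_H$, which is a routine consequence of uniform convergence $d(\cdot,V_k) \to d(\cdot,V)=g$ together with compactness. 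Alternatively one can invoke Blaschke's selection theorem directly, but since the paper aims to be self-contained I would include the short argument.

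Finally, to see that $\C$ is closed in $\F$, suppose $V_k \in \C$ and $V_k \to V$ in $d_H$. I would show $V$ is convex by taking two points $x, y \in V$ and a point $z$ on the minimizing geodesic segment between them; using $d_H$-convergence, pick $x_k, y_k \in V_k$ with $x_k \to x$, $y_k \to y$, let $z_k$ be the corresponding point on the geodesic from $x_k$ to $y_k$ (which lies in $V_k$ by convexity of $V_k$, taking $k$ large enough that all these segments stay in a region where geodesics are unique—this is fine since spherical convex sets in the relevant range are contained in a hemisphere), and observe $z_k \to z$; since $d(z_k, V_k)=0$ and $d(\cdot,V_k) \to d(\cdot,V)$ uniformly, we get $d(z,V)=0$, so $z \in V$.

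The main obstacle is the usual technical care needed in the sequential-compactness argument—namely establishing that uniform convergence of the distance functions $d(\cdot,V_k)$ is equivalent to $d_H$-convergence of the sets, and handling the nonemptiness of the limit—but this is standard and poses no conceptual difficulty; the convexity-closedness part is entirely straightforward given that spherical convexity here is understood for sets lying in a hemisphere so that geodesic segments are well-defined and vary continuously with their endpoints.
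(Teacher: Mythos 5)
Your proof is correct, but note that the paper does not actually prove this proposition: it is stated without proof and the reader is referred to Rockafellar's book (the content is essentially the Blaschke selection theorem plus completeness of the Hausdorff metric). Your self-contained route --- identifying $d_H(V_1,V_2)$ with $\sup_{x\in\S}|d(x,V_1)-d(x,V_2)|$ and applying an Arzel\`a--Ascoli/diagonal argument to the $1$-Lipschitz distance functions, then extracting the limit set as the zero set of the limit function --- is the standard way to supply what the citation covers, and all the steps you flag as routine (nonemptiness of the limit, equivalence of uniform convergence of distance functions with $d_H$-convergence) do go through. Two small points to tighten. First, with the paper's definition $V_\epsilon=\{y\in\S;\, d(y,V)<\epsilon\}$ the case $\epsilon=0$ is degenerate ($V_0=\emptyset$), so the separation axiom should be phrased via the infimum over $\epsilon>0$, giving $V_1\subset\bigcap_{\epsilon>0}\bar{(V_2)_\epsilon}=V_2$. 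Second, in the closedness of $\C$, the paper defines spherical convexity through convexity of the generated Euclidean cone in $\R^{m+1}$, so $\S$ itself belongs to $\C$ and your parenthetical about hemispheres does not literally apply to it; either treat $\S$ separately (the paper records that it is an isolated point of $\C$) or pass to the cones intersected with the unit ball, where stability of convexity under Hausdorff limits is immediate. For a pair of non-antipodal points $x,y$ in the limit set your geodesic argument is fine as written, since the approximants $x_k,y_k$ are eventually non-antipodal and the minimizing segment then depends continuously on its endpoints.
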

For more on the Hausdorff metric, including a proof of  the proposition above, we refer to Rockafellar's book \cite{Rocka70}. From now on, we assume that $\F$ and $\C$ are endowed with the Hausdorff metric. Note that a set $\omega \subset \S$ is convex if and only if the Euclidean cone generated by $\omega$ is convex. Consequently, the Hahn-Banach theorem yields that any $\omega \in \mathcal{C}\setminus \{\S\}$ is contained in a closed ball of radius $\pi/2$, and $\S$ is an isolated point in $\mathcal{C}$.

Let us recall that $\omega^*$, the polar set of $\omega$ on $\S$, is defined by the formula
\begin{equation}\label{polar} \omega^* = \{ y \in \S; \langle y,x\rangle \leq 0 \,\forall x \in \omega\}= \{ y \in \S; d(y,\omega) \geq \pi/2\}.
\end{equation}
\begin{prop}\label{conti}The polar map 
$$
\begin{array}{rcl}
\C & \longrightarrow & \C \\
 \omega &\longmapsto &\omega^*
 \end{array}
$$ is continuous.
\end{prop}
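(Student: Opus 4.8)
The plan is to prove continuity of the polar map $\omega \mapsto \omega^*$ on $\C$ by combining the two characterizations of $\omega^*$ given in \eqref{polar}, namely $\omega^* = \{y : \langle y, x\rangle \le 0 \ \forall x \in \omega\} = \{y : d(y,\omega) \ge \pi/2\}$. Since $\F$ is a compact metric space, $\C$ is metrizable and it suffices to check sequential continuity: if $\omega_k \to \omega$ in the Hausdorff metric, then $\omega_k^* \to \omega^*$. By compactness of $\C$, the sequence $(\omega_k^*)$ subconverges to some $\omega_\infty \in \C$, so the real task is to show every such subsequential limit equals $\omega^*$; I would then pass to such a subsequence without further comment.

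\textbf{Step 1: $\omega_\infty \subset \omega^*$.} Take $y \in \omega_\infty$. After passing to a further subsequence, there are $y_k \in \omega_k^*$ with $y_k \to y$. For any $x \in \omega$, since $\omega_k \to \omega$ in $d_H$ there are $x_k \in \omega_k$ with $x_k \to x$. Then $\langle y_k, x_k\rangle \le 0$ for all $k$ (definition of $\omega_k^*$), and passing to the limit by continuity of the inner product gives $\langle y, x\rangle \le 0$. As $x \in \omega$ was arbitrary, $y \in \omega^*$.

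\textbf{Step 2: $\omega^* \subset \omega_\infty$.} This is the direction where the distance characterization is convenient. Take $y \in \omega^*$, so $d(y,\omega) \ge \pi/2$. I want to produce $y_k \in \omega_k^*$ with $y_k \to y$; this is the step I expect to be the main obstacle, since an arbitrary point at distance $\ge \pi/2$ from $\omega$ need not be at distance $\ge \pi/2$ from the nearby set $\omega_k$, so one cannot simply take $y_k = y$. The fix is to use the convexity and the inner-product description: the condition $y \in \omega^*$ says the closed hemisphere $\{z : \langle z, y\rangle \le 0\}$ contains $\omega$, equivalently $\max_{x \in \omega}\langle y, x\rangle \le 0$. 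Define $m_k := \max_{x \in \omega_k} \langle y, x\rangle$. Using $d_H(\omega_k,\omega) \to 0$ and that $x \mapsto \langle y, x\rangle$ is Lipschitz on $\S$, one gets $m_k \to \max_{x\in\omega}\langle y,x\rangle \le 0$, so $m_k \le \epsilon_k$ with $\epsilon_k \to 0$. Thus $y$ is "almost" in $\omega_k^*$, and I would perturb $y$ slightly away from $\omega_k$: concretely, pick $p_k \in \omega_k$ realizing $m_k$ (so $p_k$ subconverges to a point of $\omega$), and move $y$ a controlled amount in the direction $-\mathrm{grad}$ of the function $x\mapsto\langle y,\cdot\rangle$ at $p_k$ — equivalently rotate $y$ toward the pole of the hemisphere $\{\langle\cdot,p_k\rangle\le 0\}$ through an angle $O(\epsilon_k)$ — to obtain $y_k$ with $\langle y_k, x\rangle \le 0$ for all $x\in\omega_k$, i.e.\ $y_k\in\omega_k^*$, while $d(y_k,y)=O(\sqrt{\epsilon_k})\to 0$. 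Hence $y \in \omega_\infty$.

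\textbf{Step 3: conclude.} Steps 1 and 2 give $\omega_\infty = \omega^*$ for every subsequential limit, hence $\omega_k^* \to \omega^*$ in $d_H$, proving continuity. Two remarks streamline the writeup. First, one can handle the exceptional point $\S$ separately: $\S^* = \{y : \langle y,x\rangle\le 0\ \forall x\}= \emptyset$ would fall outside $\C$, but since $\S$ is an isolated point of $\C$ (noted in the excerpt) and appears only as the value $\omega^*=\emptyset$ — in fact for $\omega=\S$ the polar is empty, so one restricts attention to $\omega\ne\S$, on which $\omega^*\ne\emptyset$ always. Second, rather than the explicit rotation in Step 2, a cleaner alternative is to prove the two inclusions $\limsup_k \omega_k^* \subset \omega^*$ and $\omega^* \subset \liminf_k \omega_k^*$ (Kuratowski limits), which together with compactness are equivalent to Hausdorff convergence; the $\liminf$ inclusion is exactly Step 2 and the perturbation argument is the unavoidable core. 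I would present the perturbation quantitatively only to the extent needed to see $d(y_k,y)\to 0$.
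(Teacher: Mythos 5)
Your overall scheme (compactness of $\C$, identification of every subsequential limit of $\omega_k^*$ with $\omega^*$, separate treatment of the isolated point $\S$) is sound, and Step 1 is complete; note that this is a genuinely different route from the paper, which does not prove the proposition directly but invokes the stronger fact that the polar map is a local isometry, citing Rockafellar--Wets. The problem is Step 2, which you yourself flag as the core: the claim that rotating $y$ toward the pole $-p_k$ through an angle $O(\epsilon_k)$ lands in $\omega_k^*$ is exactly the nontrivial point, and as written it is asserted, not proved. The rotation fixes the active constraint at $p_k$ but nothing in your text controls the other near-active constraints, and the claim is false without convexity: for the (non-convex) two-point set $\omega=\{(\cos\alpha,0,\sin\alpha),(-\cos\alpha,0,\sin\alpha)\}\subset\Sp^2$ with $\sin\alpha=\epsilon$ and $y=(0,0,1)$, one has $\max_{x\in\omega}\langle y,x\rangle=\epsilon$ while $d(y,\omega^*)\geq\pi/2$, since $\omega^*\subset\{z_3\le 0\}$. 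So convexity must enter at precisely this step; you gesture at it (``use the convexity'') but never use it.

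The gap is fillable, and the bound even improves from $O(\sqrt{\epsilon_k})$ to $O(\epsilon_k)$. Since $p_k$ maximizes $x\mapsto\langle y,x\rangle$ over the convex set $\omega_k$, first-order optimality along the geodesic from $p_k$ to any $x\in\omega_k$ gives $\langle y,x\rangle\le m_k\langle p_k,x\rangle$: write $x=\cos d\,p_k+\sin d\,w$ with $w$ the unit initial direction, note that maximality of $p_k$ on the geodesic (which lies in $\omega_k$) forces $\langle y,w\rangle\le 0$, and check the antipodal case $x=-p_k$ directly. Hence $\langle y-m_kp_k,\,x\rangle\le 0$ for all $x\in\omega_k$, so $y_k:=(y-m_kp_k)/|y-m_kp_k|$ lies in $\omega_k^*$, and $\cos d(y,y_k)=\sqrt{1-m_k^2}$, i.e. $d(y,y_k)=\arcsin m_k\le\arcsin\epsilon_k\to 0$. (Equivalently, project $y$ onto the polar cone of the cone generated by $\omega_k$ and use the Moreau decomposition to see the distance is at most $m_k$.) This is exactly your rotation toward $-p_k$, but with the convexity input made explicit; with it (plus the trivial case $m_k\le 0$, where $y_k=y$, and the observation that $\omega_k\ne\S$ eventually so $\omega_k^*\ne\emptyset$ and the compactness step applies), your Step 2 and hence the whole argument is correct.
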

\begin{remark}
The polar map is actually a local isometry. A proof of this fact can be found in \cite[p 501]{RockaWets}.
\end{remark}
 The next result is based on the following fact whose proof is an easy consequence of standard theorems in Measure theory.

\noindent  Fact: Let $\theta$ be a Borel probability measure on $\S$. The map 
 \begin{equation}\label{fact}
\begin{array}{rcl}
\F &\longrightarrow & [0,1]  \\
 V &  \longmapsto & \theta (V)
\end{array}
\end{equation}
 is upper semi-continuous.

 The above fact can be sharpened when we focus on $\C$. 
 
 \begin{lemma}\label{conticonv}
 The map
$$ \begin{array}{rcl}
\C &\longrightarrow & [0,1]  \\
 \omega &  \longmapsto & \theta (\omega)
\end{array}
$$
is continuous at any point where $\theta(\partial \omega)=0$.
\end{lemma}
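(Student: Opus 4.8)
The plan is to prove the two inequalities $\limsup_n\theta(\omega_n)\le\theta(\omega)$ and $\liminf_n\theta(\omega_n)\ge\theta(\omega)$ for an arbitrary sequence $\omega_n\to\omega$ in $\C$, under the assumption $\theta(\partial\omega)=0$; sequential continuity suffices since $\C$ is metric. The first inequality is nothing but the upper semi-continuity stated just before the lemma (see (\ref{fact})), applied in $\F$ to the sequence $\omega_n\to\omega$. So the only content is the lower bound. Since $\omega$ is closed, $\partial\omega\subset\omega$, hence $\theta(\omega)=\theta(\mathrm{int}\,\omega)+\theta(\partial\omega)=\theta(\mathrm{int}\,\omega)$, where $\mathrm{int}\,\omega$ denotes the interior of $\omega$ in $\S$. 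Because $\theta$ is a finite Borel measure on the compact metric space $\S$, it is inner regular, so $\theta(\mathrm{int}\,\omega)=\sup\{\theta(K);\ K\subset\mathrm{int}\,\omega\text{ compact}\}$. Thus it suffices to show that every compact $K\subset\mathrm{int}\,\omega$ is contained in $\omega_n$ for all $n$ large enough: this gives $\liminf_n\theta(\omega_n)\ge\theta(K)$ for each such $K$, whence $\liminf_n\theta(\omega_n)\ge\theta(\mathrm{int}\,\omega)=\theta(\omega)$.

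The main step is to prove this containment statement, which is the heart of the argument and the only place where convexity is used. If $\omega=\S$ there is nothing to do, since $\S$ is an isolated point of $\C$ and so $\omega_n=\S$ eventually; if $\mathrm{int}\,\omega=\emptyset$ the only compact subset of $\mathrm{int}\,\omega$ is empty; so assume $\omega\neq\S$ and $K\neq\emptyset$. By compactness of $K$ and openness of $\mathrm{int}\,\omega$, there is $\delta>0$ with $\overline B(x,\delta)\subset\omega$ for all $x\in K$ (closed spherical balls), and shrinking $\delta$ we may assume $\delta<\pi/2$. Suppose the statement fails; passing to a subsequence, pick $x_n\in K\setminus\omega_n$. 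Since $x_n$ is a unit vector not lying in the nonempty closed convex set $\omega_n$, it does not lie in the closed convex cone it generates in $\R^{m+1}$; separating $x_n$ from that cone yields a unit vector $u_n$ with $\langle x_n,u_n\rangle>0$ and $\langle y,u_n\rangle\le0$ for every $y\in\omega_n$ (the supremum of a linear form over a convex cone is $0$ or $+\infty$, and here it is finite). Now estimate $s_n:=\sup_{y\in\omega}\langle y,u_n\rangle$ in two ways. From below: since $\langle x_n,u_n\rangle>0$ we have $d(x_n,u_n)<\pi/2$, so a point $z_n$ of $\overline B(x_n,\delta)$ closest to $u_n$ satisfies $d(z_n,u_n)=\max(d(x_n,u_n)-\delta,0)<\pi/2-\delta$, hence $\langle z_n,u_n\rangle=\cos d(z_n,u_n)>\cos(\pi/2-\delta)=\sin\delta$; as $z_n\in\overline B(x_n,\delta)\subset\omega$, this gives $s_n>\sin\delta$. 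From above: writing $\eta_n:=d_H(\omega_n,\omega)\to0$, every $y\in\omega$ lies within spherical distance $\eta_n$ of some $y'\in\omega_n$ (as $\omega\subset\overline{(\omega_n)_{\eta_n}}$ and $\omega_n$ is compact), so by Cauchy--Schwarz $\langle y,u_n\rangle\le\langle y',u_n\rangle+|y-y'|\le0+\eta_n$, whence $s_n\le\eta_n$. Combining, $\sin\delta<\eta_n\to0$, a contradiction.

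These two bounds establish continuity of $\omega\mapsto\theta(\omega)$ at every $\omega$ with $\theta(\partial\omega)=0$. I expect the containment statement to be the only real obstacle: the subtlety lies entirely in the spherical geometry, namely in observing that the separating direction $u_n$ necessarily makes angle less than $\pi/2$ with $x_n$ (because the supporting functional of a convex cone is controlled), which is exactly what lets a ball of radius $\delta<\pi/2$ inside $\omega$ force a lower bound $\sin\delta$ on $s_n$ that is uniform in $n$ (indeed uniform in $K$), while Hausdorff closeness forces $s_n\le\eta_n$. The rest — the reduction via upper semi-continuity and inner regularity — is routine.
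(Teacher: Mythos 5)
Your proof is correct. The skeleton is the same as the paper's: upper semicontinuity is the Fact stated just before the lemma, so the only content is the lower bound $\liminf_n\theta(\omega_n)\ge\theta(\omega)$, and both arguments obtain it by exhibiting compact sets that fill up $\mathrm{int}\,\omega$ and are contained in $\omega_n$ for all large $n$, the hypothesis $\theta(\partial\omega)=0$ entering only to identify $\theta(\mathrm{int}\,\omega)$ with $\theta(\omega)$. The difference lies in how that containment is produced. The paper fixes $p\in\mathrm{int}\,\omega$, passes to the tangent plane at $p$ to define dilated copies $\lambda\omega$ with $d(\partial\omega,\lambda\omega)>0$ for $\lambda<1$, and asserts that $\partial\omega\subset\overline{(\omega_n)_{\varepsilon}}$ together with the convexity of $\omega_n$ forces $\omega_n\supset\lambda(\varepsilon)\omega$; the exhaustion is then by the explicit family $\lambda\omega$, with no appeal to inner regularity. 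You instead take an arbitrary compact $K\subset\mathrm{int}\,\omega$ (hence the extra, routine, appeal to inner regularity of $\theta$) and prove the containment by Hahn--Banach separation of a putative $x_n\in K\setminus\omega_n$ from the convex cone generated by $\omega_n$, arriving at the quantitative contradiction $\sin\delta<s_n\le d_H(\omega_n,\omega)\to 0$. Your version makes fully explicit the step the paper leaves terse (why Hausdorff closeness plus convexity of $\omega_n$ forces $\omega_n$ to swallow a slightly shrunk copy of $\omega$): the observation that the separating direction $u_n$ satisfies $\langle x_n,u_n\rangle>0$, so that a $\delta$-ball inside $\omega$ yields the uniform lower bound $\sin\delta$ on $\sup_{y\in\omega}\langle y,u_n\rangle$, is exactly the right quantitative substitute for the paper's dilation argument.
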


\begin{proof}
According to Fact (\ref{fact}), it suffices to prove the lower semicontinuity. We can further assume that $\stackrel{\circ}{\omega} \neq \emptyset$ since otherwise $\theta(\omega)=0$ and the continuity follows from Fact (\ref{fact}). Fix a point $p \in \stackrel{\circ}{\omega}$. Let $P$ be the convex set obtained as the intersection of the cone generated by $\omega$ in $\R^{m+1}$ with the tangent space to $\S$ at $p$. We call  $\lambda P$ the Euclidean convex body obtained by dilating $P$ to a factor $\lambda$ with respect to $p$, and  $\lambda \omega$ the trace of $\lambda P$ on $\S$. Observe that $\stackrel{\circ}{\omega} \neq \emptyset$ implies $d(\partial \omega, \lambda\omega)>0$  when $0\leq \lambda <1$. Let $(\omega_n)_{n \in \N}$ be a sequence in $\C$ converging to $\omega$. Given $\ep>0$, we have $d_H (\omega, \omega_n) <\ep$ for large $n$. In particular, this implies $\partial \omega \subset \bar{(\omega_n)_{\ep}}$. Thus for large $n$, the convexity of $\omega_n$ yields 
$$ \omega_n \supset \lambda(\ep) \omega$$
where $ 0\leq 1- \lambda(\ep)$ is small (depending on $\ep$). Since $\theta(\partial \omega)=0$, $\lim_{\lambda \uparrow 1} \theta(\lambda \omega) = \theta(\omega)$ and the proof is complete.
% memo: il suffit de prendre un \ep-reseau de partial omega, de relier les points de w_n associé à chaque point du reseau, si (en gros) 5ep <   d(\partial \omega, \lambda\omega), on est sûr que le poyèdre contient lambda omega
\end{proof}
For later use, let us state the following result. 
 \begin{lemma} \label{conti2}The map  $\Theta$ defined below is lower semi-continuous on $\F \times [0,\frac{\pi}{2})$.
 
  $$ \begin{array}{rcl}
 %  \Phi: \C \times [0,\pi/2]& \longrightarrow & \R \\
  %         (\omega,\epsilon) &\longmapsto &\sigma (\omega_\epsilon) - \sigma (\omega) \\
   %        \\
   \Theta: \F \times [0,\pi/2)& \longrightarrow & \R \\
           (V,\epsilon) &\longmapsto &\sigma (V_{\pi/2-\epsilon})                      
         \end{array}
         $$

 \end{lemma}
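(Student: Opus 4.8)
The plan is to prove lower semi-continuity of $\Theta$ directly from the definitions, reducing to the semi-continuity statement already recorded in Fact (\ref{fact}) together with an elementary monotonicity argument in the $\epsilon$ variable. Suppose $(V_n,\epsilon_n)\to (V,\epsilon)$ in $\F\times[0,\pi/2)$; we must show $\liminf_n \sigma((V_n)_{\pi/2-\epsilon_n})\ge \sigma(V_{\pi/2-\epsilon})$. The key observation is that the open set $V_{\pi/2-\epsilon}$ can be exhausted from inside: for any $\delta>0$ with $\pi/2-\epsilon-\delta>0$, the closed set $\overline{V_{\pi/2-\epsilon-\delta}}$ is contained in the open set $V_{\pi/2-\epsilon}$, and $\sigma(V_{\pi/2-\epsilon})=\lim_{\delta\downarrow 0}\sigma(V_{\pi/2-\epsilon-\delta})$ by inner regularity (monotone convergence of the measure along the increasing family of open sets $V_{\pi/2-\epsilon-\delta}$ as $\delta\downarrow 0$).

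First I would fix such a $\delta$ and use Hausdorff convergence to relate $(V_n)_{\pi/2-\epsilon_n}$ to $V_{\pi/2-\epsilon-\delta}$. Since $\epsilon_n\to\epsilon$ we have $\epsilon_n < \epsilon+\delta/2$ for $n$ large, hence $\pi/2-\epsilon_n > \pi/2-\epsilon-\delta/2$. Since $d_H(V_n,V)\to 0$, for $n$ large we have $V\subset \overline{(V_n)_{\delta/2}}$, so for any $y$ with $d(y,V)<\pi/2-\epsilon-\delta$ the triangle inequality gives $d(y,V_n)< \pi/2-\epsilon-\delta+\delta/2 = \pi/2-\epsilon-\delta/2 < \pi/2-\epsilon_n$; that is, $V_{\pi/2-\epsilon-\delta}\subset (V_n)_{\pi/2-\epsilon_n}$ for all large $n$. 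By monotonicity of $\sigma$ this yields $\liminf_n \sigma((V_n)_{\pi/2-\epsilon_n}) \ge \sigma(V_{\pi/2-\epsilon-\delta})$. Letting $\delta\downarrow 0$ and using the inner-approximation remark above gives $\liminf_n \sigma((V_n)_{\pi/2-\epsilon_n}) \ge \sigma(V_{\pi/2-\epsilon})$, which is exactly the claimed lower semi-continuity.

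The only mild subtlety — the step I expect to require the most care — is handling the boundary case $\epsilon=0$, where $\pi/2-\epsilon=\pi/2$ sits at the edge of the allowed parameter range; but since the domain is $[0,\pi/2)$ and we only ever decrease the radius to $\pi/2-\epsilon-\delta<\pi/2$, the enlargements $V_{\pi/2-\epsilon-\delta}$ remain honest open subsets of $\S$ and the argument goes through unchanged. Note that no convexity of $V$ is used: the statement and proof are valid on all of $\F\times[0,\pi/2)$, as asserted, and in fact $\sigma$ could be replaced by any Borel probability measure. (It is worth emphasizing that upper semi-continuity fails in general — this is why only the lower bound is claimed — which is consistent with the direction of Fact (\ref{fact}).)
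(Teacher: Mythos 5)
Your argument is correct and is essentially the paper's own proof: both fix a small margin ($\delta$ for you, $1/k$ in the paper), use Hausdorff convergence plus the triangle inequality to include the slightly shrunk enlargement $V_{\pi/2-\epsilon-\delta}$ of the limit set into $(V_n)_{\pi/2-\epsilon_n}$ for large $n$, and then let the margin go to $0$ using continuity from below of $\sigma$ along the increasing family of open enlargements. No gap to report.
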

 
 \begin{proof} %First, we prove the property concerning $\Theta$. 
 Let $(\epsilon_n)_{n \in \N}$ be a sequence of positive numbers going to $\epsilon \in [0,\frac{\pi}{2})$ and $(V_n)_{n \in \N} \in \F^{\N}$ a sequence converging to $V_{\infty}$. We fix $k$ a positive integer. For large $n$, we have
 \begin{equation}\label{e20}
   (V_n)_{\pi/2- \epsilon_n}  \supset (V_n)_{\pi/2-(\epsilon+ \frac{1}{k})}.
 \end{equation}
 Up to enlarging $n$, we can also assume  that $V_{\infty} \subset (V_n)_{\frac{1}{k}}$. %This entails that for any $0 \leq \alpha < \frac{\pi}{2}$
% \begin{equation}\label{e21}
% (V_{\infty})_{\alpha} \subset (V_n) _{\alpha+ \frac{1}{m}}.
 %\end{equation}       
 %Combining (\ref{e20}) together with (\ref{e21}) gives (for large $n$)
 This gives,
 $$  (V_{\infty})_{\pi/2-(\epsilon+ \frac{2}{k})}  \subset  (V_n)_{\pi/2-(\epsilon+ \frac{1}{k})} \subset  (V_n)_{\pi/2- \epsilon_n} 
 $$ 
 Since $ (V_{\infty})_{\pi/2-(\epsilon+ \frac{2}{k})}$ is an nondecreasing sequence of sets with respect to $k$ whose limit is $(V_{\infty})_{\pi/2- \epsilon}$, we have $\sigma((V_{\infty})_{\pi/2-(\epsilon+ \frac{2}{k})})) \rightarrow \sigma((V_{\infty})_{\pi/2- \epsilon})$. Since $k$ is arbitrary in our previous computations, this gives the result.
 
% The proof of the other property is similar, it uses (\ref{fact}) and the fact that the boundary of a convex body is negligible for the Lebesgue measure.
 \end{proof}
 
 \subsection{Reinforcement of Assumption (\ref{assump})} 
 In this part, we use the compactness of $\C$ and $\F$ to get some safety margin in (\ref{assump}). Intuitively speaking, this corresponds to the fact that the origin of $\R^n$ is located within the convex body we are looking for. In order to state our first improvement, let us notice that Alexandrov's assumption (\ref{assump}) can be reformulated as
 $$\mu(\S \setminus \omega) > \sigma (\omega^ *)$$
 where $w^*$ is defined in (\ref{polar}). The first statement is a straightforward consequence of the compactness of $\C$, Proposition \ref{conti}, (\ref{fact}), and Lemma \ref{conticonv}.
 
\begin{prop}\label{prop42} Let $\mu$  be a Borel probability measure satisfying  (\ref{assump}). There exists a positive number $\epsilon$ such that for all  $\omega \in \C\setminus \{\S\}$,

\begin{equation}\label{rein} 
\mu(\S \setminus \omega) > \sigma (\omega^ *) + \epsilon.
\end{equation}

\end{prop}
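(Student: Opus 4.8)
The plan is to argue by contradiction using the compactness of $\C$ and the semicontinuity results established above. Suppose no such $\epsilon$ exists. Then for each $n \in \N$ there is a spherical convex set $\omega_n \in \C\setminus\{\S\}$ such that
$$\mu(\S\setminus\omega_n) \leq \sigma(\omega_n^*) + \tfrac1n.$$
By Proposition \ref{compact}, $\C$ is compact, so after passing to a subsequence we may assume $\omega_n \to \omega_\infty$ in the Hausdorff metric for some $\omega_\infty \in \C$. The first subtlety is that we must rule out $\omega_\infty = \S$; but since $\S$ is an isolated point of $\C$ (noted right after Proposition \ref{compact}), convergence $\omega_n \to \S$ would force $\omega_n = \S$ for large $n$, contradicting $\omega_n \in \C\setminus\{\S\}$. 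Hence $\omega_\infty \in \C\setminus\{\S\}$.

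Next I would pass to the limit in the inequality. For the left-hand side, apply the Fact (\ref{fact}) to the measure $\mu$: the map $V \mapsto \mu(V)$ is upper semicontinuous on $\F$, so $V \mapsto \mu(\S\setminus V) = 1 - \mu(V)$ is lower semicontinuous, giving
$$\mu(\S\setminus\omega_\infty) \leq \liminf_n \mu(\S\setminus\omega_n).$$
For the right-hand side, use Proposition \ref{conti} (continuity of the polar map $\omega\mapsto\omega^*$ on $\C$) together with the Fact (\ref{fact}) applied to $\sigma$: the composition $\omega \mapsto \sigma(\omega^*)$ is upper semicontinuous, hence $\limsup_n \sigma(\omega_n^*) \leq \sigma(\omega_\infty^*)$. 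Combining these with $\mu(\S\setminus\omega_n) \leq \sigma(\omega_n^*) + \tfrac1n$ and letting $n\to\infty$ yields
$$\mu(\S\setminus\omega_\infty) \leq \sigma(\omega_\infty^*),$$
which directly contradicts Alexandrov's assumption (\ref{assump}) in its reformulated form $\mu(\S\setminus\omega) > \sigma(\omega^*)$ applied to the nonempty convex set $\omega_\infty \subsetneq \S$. This contradiction proves the proposition.

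I would also double-check the edge case where $\omega_\infty$ has empty interior: then $\sigma(\omega_\infty^*)$ could be close to $\sigma$ of a hemisphere, but this causes no problem since (\ref{assump}) is assumed for \emph{all} nonempty convex $\omega\subsetneq\S$, including lower-dimensional ones, so the strict inequality $\mu(\S\setminus\omega_\infty) > \sigma(\omega_\infty^*)$ still holds and still contradicts the limiting inequality. The main obstacle, such as it is, is purely bookkeeping: one must be careful that all the semicontinuity statements point in the mutually consistent direction (upper semicontinuity of $\mu(\cdot)$ and of $\sigma((\cdot)^*)$) so that the contradiction is actually obtained, and one must invoke the isolatedness of $\S$ in $\C$ to keep the limit set in the admissible class. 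No delicate estimate is needed; this is a soft compactness argument, which is precisely why Lemma \ref{conticonv} (needed for the \emph{finer} reinforcement later) is not required here — the Fact (\ref{fact}) alone suffices, as the statement of the proposition indicates.
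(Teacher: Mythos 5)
Your argument is correct and is exactly the soft compactness-plus-semicontinuity argument the paper has in mind (the paper only cites the ingredients rather than writing it out): compactness of $\C$ and isolatedness of $\S$ give a limit $\omega_\infty\in\C\setminus\{\S\}$, and the two one-sided semicontinuities from Fact (\ref{fact}) and Proposition \ref{conti} point the right way to contradict (\ref{assump}). Your observation that Lemma \ref{conticonv} is not actually needed is also right, since only upper semicontinuity of $\omega\mapsto\mu(\omega)$ and of $\omega\mapsto\sigma(\omega^*)$ is used, not full continuity.
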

\begin{remark} By definition of $\C$, $\emptyset \not\in \C$.
\end{remark}
Now, we want to use the compactness of $\F$ in order to strengthen (\ref{assump}) in another way. To this aim, we note that for an arbitrary set $U$,
 \begin{equation}\label{convstar}
 U^* = (Conv(U))^*
 \end{equation} 
 where $U^*= \{ y \in \S; \langle y,x\rangle \leq 0 \,\forall x \in U\}$ and $Conv$ stands for the convex hull. Thus, (\ref{assump}) also holds true for all $V \in \F$ such that $Conv(V) \neq \S$. 

Assumption (\ref{assump}) can be improved as follows.

\begin{prop}\label{rein3} Let $\mu$ be a Borel probability measure satisfying (\ref{assump}). There exists $\alpha >0$ such that for all $F \in \F$,

\begin{equation}\label{rein2} 
\mu (F) \leq \sigma (F_{\pi/2-\alpha}).
\end{equation}
\end{prop}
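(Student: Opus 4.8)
The plan is to argue by contradiction using the compactness of $\F$ for the Hausdorff metric, in the same spirit as the proof of Proposition \ref{prop42}. Suppose no such $\alpha>0$ exists. Then there is a sequence $\alpha_n\downarrow 0$ and a sequence $F_n\in\F$ with $\mu(F_n)>\sigma((F_n)_{\pi/2-\alpha_n})$. By Proposition \ref{compact} the space $\F$ is compact, so after passing to a subsequence we may assume $F_n\to F_\infty$ in $\F$. I will then seek to pass to the limit in the inequality: the left-hand side should not decrease too much in the limit, while the right-hand side (a "fattening" functional) should be lower semicontinuous, which together will force $\mu(\mathrm{Conv}(F_\infty)) \ge \sigma((F_\infty)_{\pi/2})$, contradicting Alexandrov's assumption via \eqref{convstar} and the reformulation $\mu(\S\setminus\omega)>\sigma(\omega^*)$.

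For the right-hand side, I would apply Lemma \ref{conti2}: the map $\Theta(V,\epsilon)=\sigma(V_{\pi/2-\epsilon})$ is lower semicontinuous on $\F\times[0,\pi/2)$, so $\liminf_n \sigma((F_n)_{\pi/2-\alpha_n}) \ge \sigma((F_\infty)_{\pi/2})$. For the left-hand side, $\mu$ is only upper semicontinuous on $\F$ by Fact \eqref{fact}, so $\limsup_n \mu(F_n) \le \mu(F_\infty)$. Combining, $\mu(F_\infty) \ge \limsup_n \mu(F_n) \ge \liminf_n \sigma((F_n)_{\pi/2-\alpha_n}) \ge \sigma((F_\infty)_{\pi/2})$. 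Since $F_\infty \subset \mathrm{Conv}(F_\infty)$, we get $\mu(\mathrm{Conv}(F_\infty)) \ge \mu(F_\infty) \ge \sigma((F_\infty)_{\pi/2}) = \sigma((\mathrm{Conv}(F_\infty))_{\pi/2})$ using \eqref{convstar} (equivalently $\omega^* = \S\setminus\omega_{\pi/2}$ up to boundary). If $\mathrm{Conv}(F_\infty)\ne\S$ this directly contradicts \eqref{assump} in its form $\mu(\S\setminus\omega)>\sigma(\omega^*)$ applied to $\omega=\mathrm{Conv}(F_\infty)$; and if $\mathrm{Conv}(F_\infty)=\S$ then $(F_\infty)_{\pi/2}=\S$ so $\sigma((F_\infty)_{\pi/2})=1\le\mu(F_\infty)\le 1$, forcing $\mu(F_\infty)=1$ with $F_\infty$ a closed set whose convex hull is all of $\S$, and one checks this is still incompatible with \eqref{assump} (taking a small convex $\omega$ disjoint from... — more carefully, $\mathrm{Conv}(F_\infty)=\S$ forces $F_\infty$ to meet every open hemisphere, so $F_\infty$ is not contained in any closed hemisphere, but $\mu$ concentrated on $F_\infty$ with $F_\infty$ possibly thin does not immediately contradict anything; I would instead handle this case by noting that $\mathrm{Conv}(F_\infty)=\S$ means $(F_\infty)_{\pi/2-\alpha}=\S$ for all small $\alpha$, hence $\sigma((F_n)_{\pi/2-\alpha_n})\to 1$, so $\mu(F_\infty)=1$ and then for any proper convex $\omega$ with $\mu(\omega)$ close to $1$, apply Proposition \ref{prop42} — actually the cleanest route is to observe $F_\infty$ itself need not be convex, so pick $\omega=\mathrm{Conv}(F_\infty)$; if that equals $\S$ one instead uses that some proper sub-hemisphere carries $\mu$-mass exceeding its own fattening's $\sigma$-measure, contradicting \eqref{assump}).

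The main obstacle I anticipate is the degenerate case $\mathrm{Conv}(F_\infty)=\S$ (equivalently, when the $F_n$ spread out so that their $(\pi/2-\alpha_n)$-neighborhoods exhaust $\S$): there the naive contradiction with \eqref{assump} is vacuous since $\sigma(\S_{\pi/2-\alpha_n})=1$ already. I would resolve this by arguing that in that situation $\mu(F_\infty)=1$, but $F_\infty$, being a proper closed subset of $\S$ (it cannot be all of $\S$ since then $\mu$ concentrated on it with... ), lies within distance $<\pi/2$ of every point by Remark \ref{remtriv} only if $\mu$'s support does, which it does; so $F_\infty\supset\supp\mu$ and $\mathrm{Conv}(F_\infty)=\S$ is compatible only if $\supp\mu$ is not in a hemisphere — which is fine — so this needs a separate small argument: cover $\S$ by the fattened sets and use that $\mu(\S\setminus\omega)>\sigma(\omega^*)\ge 0$ for $\omega$ a closed hemisphere gives $\mu$ of an open hemisphere $>0$ on both sides, i.e. no atom-at-infinity obstruction, and then a direct compactness/partition argument on $\F$ closes the gap. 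The routine parts — extracting the convergent subsequence and the two semicontinuity inequalities — are immediate from the cited results.
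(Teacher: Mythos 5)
Your framework --- argue by contradiction, use the compactness of $\F$ (Proposition \ref{compact}) to extract $F_{\alpha_n}\to F_\infty$, then combine the upper semicontinuity of $F\mapsto\mu(F)$ (Fact (\ref{fact})) with the lower semicontinuity of $\Theta$ (Lemma \ref{conti2}) to get $\mu(F_\infty)\geq\sigma((F_\infty)_{\pi/2})$ --- is exactly the paper's, and your treatment of the case $\mathrm{Conv}(F_\infty)\neq\S$ is correct. The genuine gap is the degenerate case $\mathrm{Conv}(F_\infty)=\S$, which you correctly flag as the main obstacle but do not close. The point is that in this case \emph{no contradiction can be extracted from the limit object}: the conclusion $\mu(F_\infty)=1$, i.e.\ $F_\infty\supset\supp(\mu)$, is perfectly consistent with (\ref{assump}) (the set $F_\infty=\supp(\mu)$ genuinely satisfies $\sigma((F_\infty)_{\pi/2})-\mu(F_\infty)=0$). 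Hence your proposed exits --- that ``some proper sub-hemisphere carries $\mu$-mass exceeding its own fattening's $\sigma$-measure'', or an unspecified ``compactness/partition argument'' --- cannot work as stated: the first is false in general, and the second is not an argument.

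The missing idea is to go back to the \emph{sequence} rather than the limit, and to use that the fattening by $\pi/2-\alpha_n$ of $F_{\alpha_n}$ already covers $\S$ with room to spare. Since $F_\infty\supset\supp(\mu)$ and, by Remark \ref{remtriv}, $\mathrm{Conv}(\supp(\mu))=\S$, one has $\max_{x\in\S}d(x,\supp(\mu))<\pi/2$; write $\max_{x\in\S}d(x,\supp(\mu))=\pi/2-2\delta$. Hausdorff convergence gives $\overline{(F_{\alpha_n})_{\beta}}\supset F_\infty\supset\supp(\mu)$ for large $n$, whence $(F_{\alpha_n})_{\pi/2-\alpha_n}\supset(\supp(\mu))_{\pi/2-\alpha_n-\beta}=\S$ as soon as $\alpha_n+\beta<2\delta$. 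Then $\sigma((F_{\alpha_n})_{\pi/2-\alpha_n})=1\geq\mu(F_{\alpha_n})$, contradicting the assumed strict inequality $\mu(F_{\alpha_n})>\sigma((F_{\alpha_n})_{\pi/2-\alpha_n})$. This is precisely the paper's closing estimate $f_{\alpha_n}(F_{\alpha_n})\geq f_{\alpha_n+\beta}(\supp(\mu))=0$ (the paper first replaces $F_{\alpha_n}$ by $F_{\alpha_n}\cap\supp(\mu)$, which only helps, so that the limit is exactly $\supp(\mu)$). Without this final quantitative step your proof does not go through.
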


\begin{remark} It was already known that the curvature measure of a convex body satisfies (\ref{rein2}) (of course, at that stage we don't know yet that $\mu$ is the curvature measure of such a set). This is the content of \cite[Theorem 1.B]{treibergs}. In the same paper, Treibergs also proves that the ratio circumscribed radius/inscribed radius of a convex body can be uniformly bounded from above by a universal function of $\alpha$ and $m$. On the contrary, he proves this result is false if we use (\ref{rein}) instead of (\ref{rein2}). 
\end{remark}

\begin{proof} 
We set $f_{\alpha} (F) = \Theta(F,\alpha)-\mu(F)$. The inequality in (\ref{rein2}) can be rephrased as $f_{\alpha} (F) \geq 0$. We start by investigating  the function $f_0$. Let $F \in \F$ be such that $Conv (F) \neq \S$. Then, Alexandrov's assumption (\ref{assump}) together with (\ref{convstar}) yield
$$f_0(F) = \sigma (\S \setminus F^*) - \mu(F) \geq  \sigma (\S \setminus (Conv(F)^*) -\mu(Conv(F)) >0.$$
To complement this result, let us prove that for $F \in \F$ such that $Conv(F)= \S$,  the inequality $f_0(F) \geq 0$ holds. Indeed, by assumption, there is no closed ball of radius $\pi/2$ containing $F$. This yields $\max_{x \in \S} d(x,F)< \pi/2$. In particular, we have  $F_{\pi/2}\supset \S$ which clearly entails $f_0(F) =1 -\mu(F) \geq 0.$\par

Conversely, if $F \in \F$ is such that $f_0(F)=0$ then, by what precedes, $Conv(F)=\S$. We then have $0=f_0(F)= 1 - \mu(F)$. In other terms,
\begin{equation}\label{detail12}
 F \supset \supp (\mu).
 \end{equation}

Note that Remark \ref{remtriv} yields that $\supp (\mu)$ is not contained in any closed ball of radius $\pi/2$ thus $Conv(\supp (\mu))=\S$. Therefore, as mentioned above $\max_{x \in \S} d(x,\supp(\mu))< \pi/2$.  For sufficiently small $\alpha>0$ (i.e. such that $\pi/2 - \alpha > \max_{x \in \S} d(x,\supp(\mu))\}$), we get $\supp(\mu)_{\pi/2-\alpha} \supset \S$ which in return implies
\begin{equation}\label{erein3}
 f_{\alpha}(\supp (\mu)) =0.
 \end{equation}

Now we can prove (\ref{rein2}) by contradiction. 
Assume that for any $\alpha >0$, there exists $F_{\alpha}$ such that $f_{\alpha}(F_{\alpha}) <0$. Note that $\mu(F \cap \supp (\mu))= \mu(F)$ implies $f_{\alpha}(F \cap \supp (\mu)) \leq f_{\alpha} (F)$, therefore  we can further assume that $F_{\alpha} \subset \supp (\mu)$. 

Since the compact subsets of $\supp (\mu)$ form a compact set relative to the Hausdorff metric, we obtain the existence of a converging sequence $F_{\alpha_n}$ to $F_0 \in \F$ where $\lim_{n \rightarrow 0} \alpha_n=0$, $F_0 \subset \supp (\mu)$, and
\begin{equation}\label{detaildetail}  f_{\alpha_n} (F_{\alpha_n}) <0.
\end{equation}
The lower-semicontinuity of $\Theta$ then gives $f_0(F_0) \leq 0$. Recall that $f_0$ is nonnegative. Consequently, we infer $f_0(F_0)= 0$ and, using (\ref{detail12}), $F_0 \supset \supp (\mu)$. On the other hand, we also know that  $F_0 \subset \supp (\mu)$.

To summarize, we have found a converging sequence $F_{\alpha_n}\subset \supp (\mu)$ to $\supp (\mu)$ such that (\ref{detaildetail}) holds. By definition of Hausdorff convergence, for any small $\beta>0$ and any sufficiently large $n$ (depending on $\beta$), we have $\bar{(F_{\alpha_n})_{\beta}} \supset \supp (\mu)$. Then, we can estimate
\begin{eqnarray*}
f_{\alpha_n}(F_{\alpha_n}) &=& \sigma ((F_{\alpha_n})_{\pi/2-\alpha_n}) -\mu(F_{\alpha_n})\\
								&\geq & \sigma((\bar{(F_{\alpha_n})_{\beta}})_{\pi/2-(\alpha_n+\beta)})- \mu(\supp(\mu))\\
								& \geq & \sigma(\supp(\mu))_{\pi/2-(\alpha_n+\beta)})- \mu(\supp(\mu) )\\
								&\geq & f_{\alpha_n +\beta}(\supp(\mu)). 
\end{eqnarray*}
Observe that (\ref{erein3}) implies $f_{\alpha_n +\beta}(\supp(\mu))=0$ for sufficiently small $\alpha_n,\beta$, hence a contradiction.

\end{proof}

 %%%%%%%%%%%%%%%%%%%%%%%%%%%%%%%%%%%%%%%%%%%%%%%%%%%%%%%%%%%%%%%
 \section{Proof of Theorem \ref{transp}}
 \subsection{Well-posedness of the optimal transport problem}\label{WP}
In this part, we prove Theorem \ref{finite} whose straightforward consequence is
\begin{equation}\label{TP}
 \min_{\Pi \in \G} \int_{\S \times \S} c(n,x)\, d\Pi(n,x) <+\infty.
 \end{equation}
 Let us recall the statement of Theorem \ref{finite}.
\begin{thm}Let $ \mu$ be a Borel probability measure on $\S$ satisfying (\ref{assump}). There exists a plan $\Pi \in \Gamma(\sigma, \mu) $ such that
$$ c \in L^{\infty}(\Pi).$$
\end{thm}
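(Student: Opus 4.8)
The plan is to reformulate the conclusion as the existence of a coupling of $\sigma$ and $\mu$ concentrated on a compact band around the diagonal of $\S\times\S$, and then to produce such a coupling by a Hahn--Banach separation argument whose feasibility input is precisely Proposition \ref{rein3}. (I only discuss the non-trivial implication of Theorem \ref{finite}; the converse is immediate, for if $\Pi\in\Gamma(\sigma,\mu)$ satisfies $c\in L^\infty(\Pi)$ then $\Pi$ is concentrated on $K_r:=\{(n,x)\in\S\times\S:\ d(n,x)\le r\}$ for some $r<\pi/2$, and then (\ref{planconti}) gives $\mu(\omega)=\Pi(\S\times\omega)\le\sigma(\{n:\ d(n,\omega)\le r\})<\sigma(\omega_{\pi/2})$ for every non-empty convex $\omega\subsetneq\S$.) For the reduction: since $0\le c(n,x)\le-\log\cos r$ on $K_r$, it suffices to exhibit $\Pi\in\Gamma(\sigma,\mu)$ with $\Pi(K_r)=1$, and I fix $r:=\pi/2-\alpha/2$ where $\alpha>0$ is the constant of Proposition \ref{rein3}; the point of this choice is that $F_{\pi/2-\alpha}\subseteq F_r$ for every $F\subseteq\S$, which absorbs the difference between open and closed metric neighborhoods.

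The separation argument goes as follows. The set $\mathcal P(K_r)$ is convex and weak-$*$ compact, and $\Pi\mapsto\big((\pi_1)_{\sharp}\Pi,(\pi_2)_{\sharp}\Pi\big)$ is affine and weak-$*$ continuous, so its image $I\subseteq\mathcal P(\S)\times\mathcal P(\S)$ is convex and compact, and the claim becomes $(\sigma,\mu)\in I$. If not, Hahn--Banach strict separation provides $f,g\in C(\S)$ with
\[ \max_{(n,x)\in K_r}\big(f(n)+g(x)\big)\ <\ \int_\S f\,d\sigma+\int_\S g\,d\mu. \]
Subtracting the left-hand maximum from $f$, I may assume $f(n)+g(x)\le0$ for all $(n,x)\in K_r$ while still $\int_\S f\,d\sigma+\int_\S g\,d\mu>0$. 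Replacing $g$ by the function $x\mapsto-\max\{f(n):\ d(n,x)\le r\}$, which is continuous, pointwise not smaller than $g$, and still satisfies $f+g\le0$ on $K_r$, and then adding a suitable constant to $f$, I may moreover assume $f\ge0$ (hence automatically $g\le0$) without affecting $\int_\S f\,d\sigma+\int_\S g\,d\mu$.

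Now set $G_t:=\{n\in\S:\ f(n)\ge t\}$, a closed set. The last normalisation gives $\{x:\ -g(x)\ge t\}=\{x:\ d(x,G_t)\le r\}$, so Cavalieri's principle yields $\int_\S f\,d\sigma=\int_0^\infty\sigma(G_t)\,dt$ and $\int_\S(-g)\,d\mu=\int_0^\infty\mu\big(\{x:\ d(x,G_t)\le r\}\big)\,dt$. Hence it suffices to prove
\[ \sigma(B)\ \le\ \mu\big(\{x\in\S:\ d(x,B)\le r\}\big)\qquad\text{for every closed }B\subseteq\S, \]
since applying this with $B=G_t$ and integrating gives $\int_\S f\,d\sigma\le\int_\S(-g)\,d\mu$, i.e.\ $\int_\S f\,d\sigma+\int_\S g\,d\mu\le0$, a contradiction. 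To establish the displayed inequality I would pass to complements: for $C:=\{x:\ d(x,B)>r\}$ one has $C_r\subseteq\S\setminus B$ because $B$ is closed (if $d(y,x)<r$ with $d(x,B)>r$ then $d(y,B)>0$), and therefore, applying Proposition \ref{rein3} to the open set $C$ via inner regularity of $\mu$ and using $F_{\pi/2-\alpha}\subseteq F_r$,
\[ \mu(C)\ \le\ \sigma(C_{\pi/2-\alpha})\ \le\ \sigma(C_r)\ \le\ \sigma(\S\setminus B), \]
which rearranges to $\sigma(B)\le\mu(\{x:\ d(x,B)\le r\})$.

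The main obstacle is the separation/duality step together with the bookkeeping that makes Proposition \ref{rein3} applicable in the precise form required: one has to keep careful track of open versus closed metric neighborhoods and use inner regularity to pass from the compact sets appearing in Proposition \ref{rein3} to the super-level sets $G_t$ and to the open set $C$. (Equivalently, the existence of $\Pi\in\Gamma(\sigma,\mu)$ with $\Pi(K_r)=1$ is an instance of Strassen's theorem on couplings supported in a prescribed closed set, whose hypothesis $\mu(A)\le\sigma(\{n:\ d(n,A)\le r\})$ for closed $A$ is exactly what Proposition \ref{rein3} provides; the separation argument above can be regarded as a self-contained substitute for that theorem.)
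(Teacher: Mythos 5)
Your proof is correct, but it takes a genuinely different route from the paper's. Both arguments hinge on the safety margin $\alpha$ of Proposition \ref{rein3}, but the paper then discretizes: it mollifies $\mu$, replaces it by a finitely supported measure with rational weights, partitions $\S$ into cells of equal $\sigma$-measure and diameter at most $\alpha/4$, invokes the Marriage Lemma to match each atom to a cell contained in the ball of radius $\pi/2-\alpha/4$ around it (Hall's condition being exactly the reinforced inequality), and finally passes to a weak-$*$ limit via Banach--Alaoglu to obtain $\Pi\in\Gamma(\sigma,\mu)$ concentrated on $\{d\le\pi/2-\alpha/4\}$. You instead stay in the continuum and prove the relevant special case of Strassen's theorem directly: the set of marginal pairs of probability measures on the compact band $K_r$ is convex and weak-$*$ compact, Hahn--Banach separation together with the normalisation $-g(x)=\max\{f(n):d(n,x)\le r\}$ and the layer-cake formula reduce the matter to the inequality $\sigma(B)\le\mu(\{x:d(x,B)\le r\})$ for closed $B$, and the complementation trick (which swaps the roles of $\sigma$ and $\mu$ and uses inner regularity) is precisely what makes Proposition \ref{rein3} usable in that form --- this step is sound, as is the choice $r=\pi/2-\alpha/2$ to absorb open versus closed neighborhoods. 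What each approach buys: yours is shorter and avoids any approximation of $\mu$, at the price of a functional-analytic separation argument; the paper's is more combinatorial (Marriage Lemma plus elementary Hausdorff-metric compactness), in line with its stated aim of self-containment, though both ultimately use weak-$*$ compactness. One small remark: in your converse direction the strict inequality $\sigma(\{n:d(n,\omega)\le r\})<\sigma(\omega_{\pi/2})$ deserves a word (any convex $\omega\subsetneq\S$ lies in a closed hemisphere, so an open set of points has distance to $\omega$ strictly between $r$ and $\pi/2$); but the converse is not part of the statement being proved, and the paper's own Remark \ref{rem42} is equally terse on this point.
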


\begin{remark}\label{rem42} The converse of the above result is straightforward. Indeed, if $c \in L^{\infty}(\Pi)$ then there exists $\alpha>0$ such that for $\Pi$-a.e. pair $(n,x)$, $d(n,x)\leq \pi/2-\alpha$. By definition of a transport plan, this implies that for any Borel set $U\subset \S$, $\mu(U) \leq \sigma (\bar{U_{\pi/2-\alpha}})$; in particular $\mu$ satisfies (\ref{assump}).
\end{remark}

\begin{proof}
Let $\mu$ be a probability measure on $\S$ satisfying (\ref{assump}). Thanks to Proposition \ref{rein3}, there exists a number $\alpha >0$ such that (\ref{rein2}) holds for any $F \in \F$. The first step of the proof is to show that we can approximate $\mu$ by a finitely supported measure $\tilde{\mu}$ that still satisfies (\ref{rein2}) (up to sligthly decreasing $\alpha$). To this aim, we first approximate $\mu$ by $(\mu * \rho_{\ep})_{\ep < \alpha/4}$, being $\rho_{\ep}$ a family of standard radial mollifiers on $\Sp^m$. We fix such a $\ep$ and set $ \hat{\mu}=\mu * \rho_{\ep}$;  by definition, $\hat{\mu}$ satisfies (\ref{rein2}) with $\frac{3\alpha}{4}$ instead of $\alpha$. Now, we claim there exists a finite partition $(U_i)_{i \in \{1, \cdots N\}}$ of $\S$ made of Borel sets such that  
\begin{equation}\label{blorp}
diam (U_i) < \ep \mbox{ and }   \hat{\mu}(U_i) \in \Q,
\end{equation}
a proof is given in the appendix.

For each $U_i$, choose $x_i \in U_i$ and set 
$$ \mu_e= \sum_{i=1}^{N} \hat{\mu}(U_i)\delta_{x_i}.$$
By assumption on the diameter of $U_i$, $\mu_e$ satisfies   
 \begin{equation}\label{me33} 
\forall F \in \F; \;\;\;\; \mu_e (F) \leq \sigma \left(\cup_{x \in F} B\left(x, \pi/2-\frac{\alpha}{2}\right)\right)
\end{equation}
and the proof of the first step is complete. According to (\ref{blorp}), $\mu_e$ can be rewritten (up to repeating some the $x_i$)
$$ \mu_e= \frac{1}{M} \sum_{i=1}^M \delta_{x_i}$$
with $M \in \N$.

The next step is to show the existence of $\Pi_e \in \Gamma (\sigma, \mu_e)$ such that
 $$ \int_{\S \times \S} c(n,x) \,d\Pi_e(n,x) \leq -\ln \left(\sin \left(\frac{\alpha}{4}\right)\right).$$
Up to enlarging $M$ and repeating $x_i$ if necessary, we can build by induction on the dimension $m$ another partition $(V_i)_{i=1}^{M}$ of $\S$ such that $diam( V_i) \leq \frac{\alpha}{4}$ and $\sigma(V_i)= \frac{1}{M}$ (see the appendix). We claim that the set-valued map 
 $$
 \begin{array}{rccc}
 F: &\{1,\cdots,M\} &\longrightarrow &\left\{V_s, s\in \{1, \cdots, M\}\right\} \\
       &        i & \longmapsto & \{V_s ; V_s \subset B(x_i, \pi/2- \frac{\alpha}{4})\}
  \end{array}
  $$
 satisfies the assumptions of the Marriage lemma. Indeed, consider $I$ a subset of $\{1,\cdots,M\}$. Thanks to (\ref{me33}), we have
 $$ \frac{\sharp I }{M} \leq \mu_e (\{x_i, i \in I\}) \leq \sigma \Big(\cup_{i \in I}B\Big(x_i, \pi/2-\frac{\alpha}{2}\Big) \Big).$$
Now, by assumption on the $V_i$'s, we get
 $$ \bigcup_{i \in I}  B\Big(x_i, \pi/2-\frac{\alpha}{2}\Big)  \subset \bigcup_{  V_s \in F(I)} V_s.$$  
 Combining these properties together proves that the assumptions are satisfied. Consequently, there exists a one-to-one map $f:   \{1,\cdots,M\} \longrightarrow \{V_i ; i \in \{1, \cdots ,M\}\}$ such that for all $i$, $f(i) \subset B(x_i, \pi/2- \frac{\alpha}{4})$. This fact clearly entails that the plan which maps the mass located at $x_i$ uniformly on $f(i)$ is a plan $\Pi_e$ in $\Gamma(\sigma,\mu_e)$ such that
\begin{equation}\label{me35} 
\Pi_e \left(\left\{(n,x) \in (\S)^2; d(n,x) \leq \pi/2- \frac{\alpha}{4}\right\}\right)=1.
\end{equation}
Note that the bound does not depend on $M$ nor on $\ep$. Therefore, by letting $\ep$ go to $0$, we can construct by the same method a sequence of empirical measures which converges to $\mu$, all of whose elements satisfy (\ref{me35}). Then using the Banach-Alaoglu theorem, we can extract a subsequence of plans which converges to an element of $\Gamma(\sigma,\mu)$ that satisfies (\ref{me35}).
\end{proof}

%%%%%%%%%%%%%%%%%%%%%%%%%%%%%%%%%%%%%%%%%%

\subsection{Proof of Theorem \ref{transp}: the existence part}\label{solve}

Let us start with a definition.

\begin{defi}[$c$-concave function and $c$-subdifferential]\label{cost} Let $\phi : \S \longrightarrow \mathbb{R}\cup \{- \infty\}$ be a function. We define $\phi^c : \S \longrightarrow \mathbb{R}\cup \{-\infty,\, +\infty\}$,  the $c$-transform of $\phi$ by the formula 
$$  \phi^c(x) = \inf_{n \in \S} c(n,x)-\phi(n).$$
Such a function $\phi$ is said to be $c$-concave if for all $x \in \S$, $\phi^c(x) < + \infty$ (so that $(\phi^c)^c$ is well-defined) and $(\phi^c)^c=\phi$
(in the rest of the paper, we write $\phi^{cc}$). The $c$-subdifferential of a function $\phi$ is defined by the formula 
$$\partial_c \phi = \{(n,x) \in \S\times \S; \phi(n) + \phi^c(x) = c(n,x) \}.$$
The $c$-subdifferential at $n$ is defined as the subset of $\S$: 
$$ \partial_c\phi (n) = \{ x \in \S ;  \phi(n) + \phi^c(x) = c(n,x) \}.$$
\end{defi}

The overall idea of the proof is to build a pair of Lipschitz c-concave functions $(\phi,\phi^c)$ such that there exists an optimal plan $\Pi_0$ whose support satisfies
\begin{equation}\label{condi}
 Supp (\Pi_0) \subset  \partial_c \phi.
\end{equation}
Indeed, this yields
$$  \int_{\S} \phi(n) d\sigma(n) + \int_{\S} \phi^ c(x) d\mu(x) = \int_{\S \times \S} \phi(n)+\phi^ c(x) \, d\Pi_0(n,x)=\int_{\S \times \S} c(n,x)\, d\Pi_0(n,x)$$

and $(\phi,\phi^c)$ is then a maximizing pair. 

We will need the following regularity result on c-transform. 
\begin{prop}\label{elip} Let  $\psi : \S  \longrightarrow \R \cup \{- \infty\}$ be an upper semicontinuous function such that 
\begin{equation}\label{jesaisplus}
 \forall x \in \S \;\;\; \sigma(\{ \psi >-\infty\} \cap B(x,\pi/2)) >0.
 \end{equation}
%$$\sigma(\{\psi=-\infty\})=0.$$
Then, $\psi^c$ is real-valued and Lipschitz regular on $\S$. Moreover, $\psi^{cc}\geq \psi$.
\end{prop}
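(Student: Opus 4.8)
The plan is to prove Proposition \ref{elip} by exploiting the structure of the cost function $c$: on $\{c<+\infty\}$ it is a Lipschitz function of the spherical distance that blows up to $+\infty$ as $d(n,x)\uparrow \pi/2$, and the hypothesis \eqref{jesaisplus} forces, for every fixed $x$, the infimum defining $\psi^c(x)$ to be attained over points $n$ with $d(n,x)$ bounded away from $\pi/2$. First I would show $\psi^c$ is real-valued. Since $\psi$ is upper semicontinuous on the compact set $\S$ and bounded above, say by $K$, we have $\psi^c(x)\le c(n,x)-\psi(n)$ for any $n$; picking $n$ in $\{\psi>-\infty\}\cap B(x,\pi/4)$ (nonempty by \eqref{jesaisplus}) gives a finite upper bound that is locally uniform in $x$. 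For the lower bound, I would argue that if $d(n,x)$ were close to $\pi/2$ then $c(n,x)$ is huge while $-\psi(n)\ge -K$, so such $n$ cannot compete with the finite value already achieved; hence the infimum is effectively over a compact region $\{n : d(n,x)\le \pi/2-\delta\}$ for some $\delta=\delta(x)>0$ (made locally uniform), on which $c$ is finite, and $-\psi\ge$ its (finite, by u.s.c.) infimum there, so $\psi^c(x)>-\infty$.

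Next I would establish the Lipschitz regularity. The key point is to get a \emph{uniform} $\delta>0$ such that for all $x$ in a neighbourhood, the infimum defining $\psi^c$ is realized (or nearly realized) within $\{n:d(n,x)\le\pi/2-\delta\}$; this follows from the local uniformity of the upper bound on $\psi^c$ established above together with $\psi\le K$. On that region $c(\cdot,\cdot)$ is, by the Lemma on the cost function, a Lipschitz differentiable function with some Lipschitz constant $L=L(\delta)$. Then $\psi^c$ is, locally, an infimum of the family $x\mapsto c(n,x)-\psi(n)$ over $n$ ranging in a fixed compact set on which each member is $L$-Lipschitz in $x$ (uniformly in $n$); an infimum of a family of $L$-Lipschitz functions that is somewhere finite is itself $L$-Lipschitz. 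A standard covering/compactness argument on $\S$ promotes the local Lipschitz bounds to a global one.

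Finally, the inequality $\psi^{cc}\ge\psi$: by definition $\psi^{cc}(n)=\inf_{x}c(n,x)-\psi^c(x)$, and for every $x$ we have $\psi^c(x)\le c(n,x)-\psi(n)$, i.e. $c(n,x)-\psi^c(x)\ge\psi(n)$; taking the infimum over $x$ gives $\psi^{cc}(n)\ge\psi(n)$. (Note that $\psi^c$ being real-valued and Lipschitz, hence bounded, the quantity $\psi^{cc}$ is well-defined and the argument of the infimum is bounded below, so no $+\infty-\infty$ issue arises.)

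The main obstacle is the second step: producing a \emph{locally uniform} $\delta$ so that the competitors $n$ in the infimum stay a definite distance from the ``equator'' $d(n,x)=\pi/2$. Without \eqref{jesaisplus} one could have $\psi^c(x)=-\infty$ or lose Lipschitz control near points where $\psi$ degenerates; the hypothesis is exactly what rules this out, and the care is in checking that the finite upper bound on $\psi^c$ coming from \eqref{jesaisplus} can be taken locally uniform in $x$ (using u.s.c. of $\psi$ and a short compactness argument), which then pins down the effective domain of the infimum uniformly and lets the Lipschitz estimate for $c$ on compact subsets of $\{c<+\infty\}$ kick in.
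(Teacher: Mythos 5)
Your proposal is correct in outline and follows essentially the same route as the paper: finiteness of $\psi^c$ from \eqref{jesaisplus} (upper bound) and from $c\geq 0$ together with the upper bound on $\psi$ (lower bound), then Lipschitz regularity by confining the effective competitors in the infimum to a set $\{n:\ d(n,x)\leq \pi/2-\delta\}$ with $\delta$ uniform, where $c$ is Lipschitz; the final inequality $\psi^{cc}\geq\psi$ is the same one-line computation. The only difference is cosmetic: the paper obtains the uniform $\delta$ by first proving $\psi^c$ is continuous and then using compactness of $\partial_c\psi$ inside $\{c<+\infty\}$, whereas you extract it quantitatively from a locally uniform upper bound on $\psi^c$ and $\sup\psi<+\infty$; both work.

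One justification in your first step is wrong as stated: \eqref{jesaisplus} does \emph{not} imply that $\{\psi>-\infty\}\cap B(x,\pi/4)$ is nonempty (the set $\{\psi>-\infty\}$ may avoid a cap of radius $\pi/4$ entirely while still meeting every open half-sphere in positive measure). What you actually get, since $B(x,\pi/2)=\bigcup_k B(x,\pi/2-1/k)$ and the intersection has positive measure, is some $\eta(x)>0$ and a point $n_x$ with $\psi(n_x)>-\infty$ and $d(n_x,x)\leq \pi/2-\eta(x)$; this still yields the locally uniform finite upper bound on $\psi^c$ that your second step needs (alternatively, once $\psi^c$ is known to be finite and upper semicontinuous as an infimum of upper semicontinuous functions, local boundedness above is automatic). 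With that patch the argument goes through.
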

\begin{proof} 
First, note that $\psi^c(\S) \subset \R\cup \{-\infty\}$ follows from the definition of the $c$-transform and (\ref{jesaisplus}). Now, suppose there exists $n_0$ such that $\psi^c(n_0)=-\infty$. Combining the upper semicontinuity of $\psi$: 
$$ \liminf_{x_k \rightarrow x_{\infty}} c(n_0,x_k) - \psi (x_k) \geq c(n_0,x_{\infty})-\psi(x_{\infty} )\geq  -\psi(x_{\infty} )$$
together with the compactness of $\S$ yields the existence of $x_0$ such that
$$ -\infty= \psi^c(n_0) \geq -\psi (x_0)$$
contradicting $\psi(\S) \subset \R\cup \{-\infty\}$. Therefore $\psi^c$ is real-valued. Repeating the very same argument also gives for all $n \in \S$,
\begin{equation}\label{minlip} \psi^c(n) = \min_{x \in \S} c(n,x)- \psi(x).
\end{equation}

Second, we prove that $\psi^c: \S \rightarrow \R$ is continuous.  Let $(n_k)_{k\in \N}$ be a sequence converging to $n$. Using (\ref{minlip}) and the upper semicontinuity of $\psi$, we get
$$ \liminf_{k \rightarrow + \infty} \psi^c (n_k) \geq \psi^c(n).$$
On the other hand, since $\psi^c$ is defined as an infimum of continuous functions, it is an upper semicontinuous function. After all, $\psi^c$ is continuous and, in particular, bounded. As a consequence, $\psi^{cc}$ is well-defined and for all $n$, there exists $x$ such that 
$$ \psi^{cc}(n) = c(n,x) - \psi^c(x) \geq c(n,x) - (c(n,x)-\psi(n))= \psi(n)$$
by definition of the $c$-transform.

It remains to prove that $\psi^c$ is Lipschitz, i.e. $\sup_{n,n'} (\psi^c(n)-\psi^c(n'))/d(n,n') <+\infty$. Since $\S$ is compact and $\psi^c$ is bounded, it suffices to prove that $\psi^c$ is locally Lipschitz in the neighborhood of any point $n \in \S$. To this aim, observe that we now have for all $(n,x) \in \S \times \S$, 
$$\psi(x)+\psi^c(n) \in \R \cup \{-\infty\}.$$
Since the cost function is nonnegative, we infer
$$ \partial_c \psi= \{(n,x) \in \S\times \S; \psi(x) +\psi^c(n)=c(n,x)\} \subset \{c<+\infty\}. $$
Moreover, the continuity of $\psi^c$ and $c$, and the upper semicontinuity of $\psi$ yield that $\partial_c\psi$ is a closed (hence compact) subset of $\S\times \S$. Therefore, the definition of the cost function guarantees the existence of  $\alpha>0$ such that
 $$ \partial_c \psi \subset \{(n,x)  \in \S\times \S; \, d(n,x)<{\pi}/{2}- \alpha \}.$$
 Recall that $c$ is Lipschitz on  $\{(n,x)  \in \S\times \S; \, d(n,x)<{\pi}/{2}- \alpha \}$; we set $L_{\alpha}$ its Lipschitz constant. According to (\ref{minlip}), for all $n \in \S$, there exists $x_n$ such that $(n,x_n) \in \partial_c \psi$. By definition of $\psi^c$, we have for any $z$ close to $n$ (say $z \in B(n,\alpha/2)$),
$$ \psi^c(z) -\psi^c(n) \leq (c(z,x_n)- \psi(x_n)) -(c(n,x_n) -\psi(x_n))= c(z,x_n)- c(n,x_n) \leq L_{\alpha/2} \,d(z,n).$$
 Using the same argument focusing on $z$ instead of $n$, we get the same estimate for $\psi^c(n) -\psi^c(z) $. Therefore $\psi^c$ is locally Lipschitz on $ B(n,\alpha/2)$ and the proof is complete.
  \end{proof}

 To complete the proof of this part, it remains to build a $c$-concave map satisfying (\ref{condi}). To this aim, we need to recall the notion of c-cyclically monotonicity introduced by Knott and Smith \cite{KS}.

 \begin{defi}[$c$-cyclically monotone set] A subset $S \subset \S \times \S$ is called a $c$-cyclically monotone set if for any  integer $s>0$ and any pairs $(n_1,x_1), \cdots ,(n_s,x_s) \, \in S$, the following inequality is satisfied:
$$ c(n_2,x_1) + c(n_3,x_2) +\cdots +c(n_1,x_{s}) \geq c(n_1,x_1) + \cdots +c(n_s,x_s) .
$$
\end{defi}
\begin{remark} When the underlying marginals are finite combination of Dirac masses, optimality of a transport plan is equivalent to $c$-cyclically monotonicity  of its support. By approximation, it can be proved that whatever the marginals are,  the support of an optimal plan is a $c$-cyclically monotone set when the cost function $c$ is a continuous map \cite[Theorem 2.3]{GMcC}.
\end{remark} 
 
The $c$-subdifferential of a $c$-convave function is a $c$-cyclically monotone set. The main result of this section is the following

\begin{thm}\label{main}Let $\mu$ be a Borel probability measure on $\S$ satisfying  (\ref{assump}) and $\Gamma_0(\sigma,\mu)$ be the set of optimal plans in the transport problem (\ref{TP}). There exists a  Lipschitz $c$-concave map $\phi$ such that
$$\Gamma:= \bar{\bigcup_{\Pi_0 \in \Gamma_0(\sigma,\mu)} Supp (\Pi_0) }\bigcap \{ c < + \infty\} \subset \partial_c \phi.$$
\end{thm}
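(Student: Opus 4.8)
The plan is to construct $\phi$ directly from a fixed optimal plan by a Rockafellar-type formula along $c$-cyclically monotone chains, then verify that this $\phi$ simultaneously controls the supports of \emph{all} optimal plans. First I would fix one optimal plan $\Pi_* \in \Gamma_0(\sigma,\mu)$ (it exists by compactness of $\G$ and lower semi-continuity of $c$, and $\int c\,d\Pi_* < +\infty$ by Theorem \ref{finite}). Since $c$ is continuous on $\S \times \S$ with values in $\Ri$, the support $S_* := \mathrm{Supp}(\Pi_*)$ is $c$-cyclically monotone by the approximation argument recalled in the remark after the definition of $c$-cyclical monotonicity; moreover $S_* \cap \{c < +\infty\}$ is nonempty (in fact $\Pi_*(\{c<+\infty\})=1$). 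Pick a base point $(n_0,x_0) \in S_* \cap \{c < +\infty\}$ and define, for $n \in \S$,
\[
\phi(n) = \inf\Big\{ \big(c(n,x_s)-c(n_s,x_s)\big) + \big(c(n_s,x_{s-1})-c(n_{s-1},x_{s-1})\big) + \cdots + \big(c(n_1,x_0)-c(n_0,x_0)\big) \Big\},
\]
the infimum being over all finite chains $(n_1,x_1),\dots,(n_s,x_s) \in S_* \cap \{c<+\infty\}$ (with $s \geq 0$, the empty chain giving $c(n,x_0)-c(n_0,x_0)$). The standard Rockafellar argument using $c$-cyclical monotonicity of $S_*$ shows $\phi(n_0) = 0$ and $\phi > -\infty$ everywhere; more importantly, for every $(\bar n,\bar x) \in S_* \cap \{c<+\infty\}$ one has $\phi(\bar n) + (c(\bar n,\bar x) - \phi(\bar n)) = c(\bar n, \bar x)$ and $\phi(n) \le c(n,\bar x) - (c(\bar n,\bar x)-\phi(\bar n))$ for all $n$, i.e. $\phi^{cc} = \phi$ and $S_* \cap \{c<+\infty\} \subset \partial_c \phi$.

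Next I would upgrade the regularity: $\phi$ is an infimum of a family of translates $n \mapsto c(n,x) - \text{const}$, hence upper semi-continuous, and by construction $\phi(n) \le c(n,x_0) - c(n_0,x_0) < +\infty$ for $n$ near $n_0$, so $\{\phi > -\infty\}$ is nonempty. To apply Proposition \ref{elip} to $\psi := \phi$ I need the hypothesis (\ref{jesaisplus}), namely that $\{\phi > -\infty\}$ meets every half-ball $B(x,\pi/2)$ in positive $\sigma$-measure; since I will in fact show $\phi$ is finite everywhere (from $\phi > -\infty$ and $\phi \le c(\cdot,x_0)-c(n_0,x_0)$ which is finite on the open half-sphere around $x_0$, combined with an analogous bound using any point of $S_*\cap\{c<+\infty\}$ whose first coordinates cover $\S$ up to distance $<\pi/2$ from any point — here Remark \ref{remtriv} and the fact that $\sigma$-a.e. $n$ lies in some $\partial_c\phi(n)$-type relation enter), condition (\ref{jesaisplus}) holds trivially. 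Then Proposition \ref{elip} gives that $\phi^c = \psi^c$ is real-valued and Lipschitz, and $\phi^{cc} \ge \phi$; together with $\phi^{cc}=\phi$ from the Rockafellar construction, $\phi = \phi^{cc}$ is itself Lipschitz and $c$-concave. Actually the cleanest route is: first produce \emph{any} $c$-concave $\phi_1$ with $S_* \cap \{c<+\infty\} \subset \partial_c\phi_1$ via Rockafellar, then replace $\phi_1$ by $\phi := (\phi_1^c)^c$ and invoke Proposition \ref{elip} (applied to $\psi = \phi_1^c$, which is real-valued, then to get Lipschitzness of $\phi$) to make it Lipschitz without destroying the inclusion, since $\partial_c\phi \supseteq \partial_c\phi_1$ when $\phi \le \phi_1$ on the relevant set.

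The remaining and genuinely substantive step is to pass from $S_* \cap \{c<+\infty\}$ to $\Gamma = \overline{\bigcup_{\Pi_0} \mathrm{Supp}(\Pi_0)} \cap \{c<+\infty\}$, i.e. to show that a \emph{single} $\phi$ works for every optimal plan at once. The key observation is that the set $\Gamma_0(\sigma,\mu)$ of optimal plans is convex, so the "barycentric" plan $\bar\Pi := \int \Pi_0 \, d\nu(\Pi_0)$ over any probability measure $\nu$ on $\Gamma_0$ is again optimal and its support contains $\bigcup_{\Pi_0 \in \mathrm{supp}\,\nu} \mathrm{Supp}(\Pi_0)$; choosing $\nu$ with full support on the (compact, metrizable) set $\Gamma_0(\sigma,\mu)$ yields a single optimal plan $\Pi_\infty$ with $\mathrm{Supp}(\Pi_\infty) = \overline{\bigcup_{\Pi_0} \mathrm{Supp}(\Pi_0)}$. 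Running the construction above with $\Pi_* = \Pi_\infty$ then gives a Lipschitz $c$-concave $\phi$ with $\mathrm{Supp}(\Pi_\infty) \cap \{c<+\infty\} = \Gamma \subset \partial_c\phi$, which is exactly the claim. The main obstacle I anticipate is the careful handling of $\{c=+\infty\}$ throughout — chains must stay inside $\{c<+\infty\}$ for the formula to make sense, optimality/$c$-cyclical monotonicity must be transferred correctly across the approximation even though $c$ is only $\Ri$-valued (this is why Theorem \ref{finite}, guaranteeing $\Pi_*(\{c<+\infty\})=1$, is essential), and one must check that restricting $\Gamma$ to $\{c<+\infty\}$ in the statement is precisely what makes the inclusion into $\partial_c\phi$ (a subset of $\{c<+\infty\}$) possible.
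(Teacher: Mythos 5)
Your overall architecture (a Rockafellar-type infimum over chains anchored at a base point $(n_0,x_0)$, followed by regularization via Proposition \ref{elip}) matches the paper's, and your device for handling all optimal plans at once --- taking a barycentre $\int \Pi_0\,d\nu$ over the compact convex set $\Gamma_0(\sigma,\mu)$ with a fully supported $\nu$, so that a single optimal plan has support equal to $\overline{\bigcup\mathrm{Supp}(\Pi_0)}$ --- is a correct and mildly different route from the paper, which instead observes that finite convex combinations of optimal plans are optimal, so that the union of supports (and hence its closure) is directly $c$-cyclically monotone.

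There is, however, a genuine gap at the heart of the argument: you assert that ``the standard Rockafellar argument'' yields $\phi>-\infty$ everywhere, and you dispose of $\phi<+\infty$ with a vague appeal to half-spheres around points of $S_*$. For a cost taking the value $+\infty$ this is exactly where the standard argument breaks down. The chain formula gives $\phi(n)<+\infty$ only if there exists a finite chain $(n_1,x_1),\dots,(n_s,x_s)\in\Gamma$ with $c(n_1,x_0)<+\infty$, $c(n_{i+1},x_i)<+\infty$ and $c(n,x_s)<+\infty$; likewise the lower bound $\phi(n)\geq c(n,x)-c(n_0,x)$ obtained from $(n,x)\in\Gamma$ and $\phi(n_0)=0$ is vacuous when $d(n_0,x)\geq\pi/2$, so $\phi(n)>-\infty$ also requires a finite admissible chain back to $n_0$. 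Nothing in your write-up produces such chains, and their existence is not a formality: it is precisely the point where Alexandrov's hypothesis enters the proof. The paper constructs the chains by setting $A_0=\{x_0\}$, $A_{i+1}=p_x\bigl(p_n^{-1}((A_i)_{\pi/2})\cap\Gamma\bigr)$ and showing, via the marginal conditions and the quantitative reinforcement (\ref{rein}) of (\ref{assump}) (Proposition \ref{prop42}), that $\mu(A_{i+1})\geq\mu(A_i)+\epsilon$ as long as $(A_i)_{\pi/2}\neq\S$; since $\mu$ is a probability measure this forces $(A_k)_{\pi/2}=\S$ after finitely many steps, which is exactly the required connectivity. Without this step (or a substitute) the function $\phi$ could be identically $+\infty$ outside a half-sphere, Proposition \ref{elip} could not be applied, and the proof would not close. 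Note also that the paper only obtains $\varphi>-\infty$ on $p_n(\Gamma)$, i.e.\ $\sigma$-almost everywhere (which suffices for hypothesis (\ref{jesaisplus})); your claim of finiteness from below everywhere is stronger than what the construction delivers.
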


\begin{remark} This result is a generalization of a construction due to Rockafellar which gives a convex function from a cyclically monotone set (defined similarly in terms of the standard scalar product instead of $c$). When the cost function is a lower semi-continuous real-valued map, Theorem \ref{main} is known as the Rockafellar-Rüschendorff Theorem \cite{ruschen96} (the result holds true for {\it any} $c$-monotone set in this setting). Our proof will be along the same lines; however due to the fact that the cost function assume infinite values, there are additional difficulties to prove the map $\phi$ is well-defined.
\end{remark}

\begin{proof}
 We first prove that  $\Gamma$ is a $c$-cyclically monotone set. Note that being the mass transport problem linear, any finite convex combination of optimal plans is an optimal plan. Thus, any  $(n_1,x_1), \cdots ,(n_s,x_s) \, \in \bigcup_{\Pi_0 \in \Gamma_0(\sigma,\mu)} Supp (\Pi_0) $ belong to the support of an optimal plan. As recalled above, the support of any optimal plan relative to $c$ is $c$-cyclically monotone.  As a consequence, the set 
 $\bigcup_{\Pi_0 \in \Gamma_0(\sigma,\mu)} Supp (\Pi_0) $ is $c$-cyclically monotone.  Furthermore, a simple approximation argument based on the continuity of  the cost function  shows that the closure of any $c$-cyclically monotone set is $c$-cyclically monotone. $\Gamma$ is then $c$-cyclically monotone as a subset of a set satisfying this property.

 According to the result in Section \ref{WP}, 
 $$ \int c \, d\Pi_0 < +\infty$$
 whenever $\Pi_0 \in \Gamma_0(\sigma,\mu)$. Therefore, 
%\begin{equation}\label{machin}
 $\Pi_0(\{c<+\infty\})=1$
% \end{equation} 
 and $\Gamma$ is a set of full $\Pi_0$-measure.

We fix $(n_0,x_0) \in \Gamma$ and define
$$ \varphi (n) = \inf_{s \in \N} \, \inf \big\{ \sum_{i=0}^s c(n_{i+1},x_{i})  -c(n_i,x_i) \,;\, \forall i \in \{1, \cdots, s\} \,(n_i,x_i) \in \Gamma \big\}$$
where $n_{s+1}=n$.
By assumption, $\Gamma \subset \{ c < + \infty \}$ thus, for  $(n_1,x_1) \cdots (n_s,x_s) \in \Gamma$, the term inside the brackets above is in $\R \cup \{+ \infty\}$ and the map $\varphi$ is well-defined. Moreover $\varphi (\S) \subset \R \cup \{\pm \infty \}$.

The first step is to prove that $\varphi(n) < + \infty$ for all $n$. We start with proving $\varphi(n_0) = 0$.
 Taking $s=0$ in the definition above leads to the inequality 
 \begin{equation}\label{s=0}
 \varphi(n) \leq c(n,x_0)-c(n_0,x_0).
 \end{equation}
 In particular, $\varphi(n_0) \leq 0$. Let  $(n_1,x_1), \cdots ,(n_s,x_s) \, \in \Gamma$. The term appearing in the definition of $\varphi(n)$ when $n=n_0$ reads
 $$  c(n_1,x_0) + \cdots + c(n_0,x_s) - (c(n_0,x_0) + \cdots c(n_s,x_s)) \geq 0$$
 since $\Gamma$ is $c$-cyclically monotone. After all $\varphi(n_0) = 0$.

  As a particular case, consider $n \in B(x_0, \pi/2)$. By definition of the cost function and using (\ref{s=0}), we get $\varphi(n)<+ \infty$ for such a $n$. For general $n$, we will show that there exists a finite chain of points from $n_0$ to  $n$ such that, roughly speaking, the previous condition is satisfied. More precisely, we will prove that there exists $k \in \N\setminus\{0\}$ and $(n_i,x_i)_{1\leq i \leq k} \in \Gamma^k$ such that
$$ \begin{array}{ll}
 & c(n_1,x_0) < + \infty \\
 \forall i \in \{1,\cdots,k-1\} & c(n_{i+1},x_i) < + \infty  \\
 & c(n,x_k) < + \infty. 
 \end{array}
 $$
To this aim, let us define by induction the following {\it nondecreasing} sequence of measurable sets (as analytic sets):
$$
\begin{array}{l}
A_0 = \{x_0\} \\
A_{i+1} = p_x(p_n^{-1} ((A_i)_{\pi/2})\cap \Gamma)
\end{array}
$$
where $p_n$ and $p_x$ stand for the projections on the $n$ and $x$ coordinates respectively. Our goal is to show that for sufficiently large $k$, $(A_k)_{\pi/2}= \S$ which, by definition of $c$, implies the existence of a chain described above of length $k$. Indeed, $n_{i+1} \in (A_{i})_{\pi/2}$ means there exists $x_{i} \in A_{i}$  at distance less than $\pi/2$ from $n_{i+1}$. Moreover, by definition of $A_{i}$, there exists $n_i$ such that  $(n_i,x_i) \in \Gamma$ and $n_i \in (A_{i-1})_{\pi/2}$. The existence of the finite chain follows by an inductive argument.

Choose any optimal transport plan $\Pi_0$ in $\Gamma_0(\sigma,\mu)$. We estimate the masses as follows
\begin{align*}
 \mu (A_{i+1})  \underset{\Pi_0 \mbox{ is a plan } (\ref{planmargi})}{=}\Pi_0(p_x^{-1}(A_{i+1}))\underset{\mbox{def. of } A_{i+1}}{\geq }&   
 \Pi_0(p_n^{-1} ((A_i)_{\pi/2})\cap \Gamma) \\ 
 \underset{\Pi_0(\Gamma)=1\;\;\;\;\;\;\;\;}{= }& \Pi_0(p_n^{-1} ((A_i)_{\pi/2})) \underset{\Pi_0 \mbox{ is a plan } (\ref{planmargi})}{=}\sigma ((A_i)_{\pi/2}).
 \end{align*}
Note that $ \S \setminus (A_i)_{\pi/2} = A_i^*$ is a convex set. Since $A_i\supset A_0\neq \emptyset$, $A_i^*\neq \S$ but $A_i^*$ is empty when $(A_i)_{\pi/2}=\S$. Therefore, either $(A_i)_{\pi/2}= \S$ or (\ref{rein}) (which is equivalent to $\sigma(\S \setminus \omega^*) > \mu (\omega) + \epsilon$) gives us 
$$ \mu (A_{i+1})\geq  \sigma ((A_i)_{\pi/2}) = \sigma(\S \setminus A_i^*) \underset{(\ref{convstar})}{=} \sigma(\S \setminus (conv(A_i))^*) \geq \mu (conv(A_i)) + \epsilon \geq \mu (A_i) + \epsilon. $$
(Note that $Conv(A_i)=\S$ would imply $A_i^*=\emptyset \Leftrightarrow (A_i)_{\pi/2}= \S$.) Consequently, since $\mu$ is a probability measure,  there exists an integer $k$ such that $(A_k)_{\pi/2}= \S$. This proves the first claim.

 Using the same approach, we will show that $\varphi > -\infty$ on $p_n(\Gamma)$ hence $\sigma-$almost everywhere. Once again, consider as a particular case a point $n \in p_n(\Gamma)$ for which there exists $x$ such that $(n,x) \in \Gamma$ and $c(n_0,x) < +  \infty$. Take $n' \in \S$ and $(n_i,x_i)_{1\leq i\leq s} \in \Gamma^s$. Using $(n_i,x_i)_{1\leq i\leq s+1}$ where $(n_{s+1},x_{s+1})= (n,x)$, we get by definition of $\varphi$
 $$ \varphi(n') \leq \sum_{i=1}^s (c(n_{i+1},x_i) -c(n_i,x_i))   + c(n',x)-c(n,x).$$
 Since $(n_i,x_i)_{1\leq i\leq s} \in \Gamma^s$ is arbitrary, we infer for all $n' \in \S$,
\begin{equation}\label{ccon}
\varphi(n') \leq \varphi(n) + c(n',x) -c(n,x).
\end{equation}
In particular, the formula above with $n'= n_0$ together with $\varphi(n_0)=0$ gives $\varphi(n) > -\infty$.

By an inductive argument, we generalize (\ref{ccon}) to
\begin{equation}\label{trick13}
 \varphi(n') \leq \varphi (n) + \big(c(\tilde{n}_1,x) - c(n,x)\big)+  \cdots +\big(c(\tilde{n}_{k},\tilde{x}_{k-1})- c(\tilde{n}_{k-1},\tilde{x}_{k-1})\big)+ \big(c(n',\tilde{x}_k)-c(\tilde{n}_k,\tilde{x}_k)\big)
\end{equation}
where $(\tilde{n}_i,\tilde{x}_i) \in \Gamma$ for all $i \in \{1,\cdots,k\}$. As a consequence, (\ref{trick13}) with $n'=n_0$ allows us to conclude provided we can find $(\tilde{n}_i,\tilde{x}_i)_{1\leq i\leq k} \in \Gamma^k$ such that 

$$ \begin{array}{ll}
 & c(\tilde{n}_1,x) < + \infty \\
 \forall i \in \{1,\cdots,k-1\} & c(\tilde{n}_{i+1},\tilde{x}_i) < + \infty  \\
 & c(n_0,\tilde{x}_k) < + \infty. 
 \end{array}
 $$
In other terms, we are done if there exists a finite chain from $n$ to $n_0$. But the previous arguments apply {\it verbatim} with $A_0=\{x\}$ and $(n,x)\in \Gamma$. Consequently, $\varphi>-\infty$ on $p_n(\Gamma)$. In particular, since $\sigma(p_n(\Gamma))=\Pi_0(\Gamma)=1$, 
$$\sigma(\{\varphi=-\infty\})=0.$$

Now, we prove that $\Gamma \subset \partial_c \varphi$. To this aim, fix $(n,x) \in \Gamma$. Using (\ref{ccon}) we get, since $n'$ is arbitrary,
$$ c(n,x)-\varphi(n) \leq \inf_{n' \in \S} c(n',x) -\varphi(n'). $$
The reverse inequality being straightforward, this implies equality: 
\begin{equation}\label{gambl}
 c(n,x) = \varphi(n) +\varphi^c(x),
 \end{equation}
  i.e. $(n,x) \in \partial_c\varphi$.

To conclude the proof, we build out of $\varphi$, a $c$-concave function. By what precedes, $\varphi$ satisfies the assumptions of Proposition \ref{elip} (recall that by definition, $\varphi$ is an infimum of continuous functions). We get by applying this proposition to $\varphi$ and then to $\varphi^c$ that $\varphi^c$ and $\varphi^{cc}$  are (real-valued) Lipschitz functions and 
\begin{equation}\label{blurb}
\varphi^{cc} \geq \varphi.
\end{equation}
 By definition of the $c$-transform, we infer from (\ref{blurb}) the inequality $\varphi^{ccc} \leq \varphi^c$.  On the other hand, applying Proposition \ref{elip} to $\varphi^c$ leads to
 $ \varphi^{ccc} \geq \varphi^c$. Consequently, $ \varphi^{ccc} = \varphi^c$ and $\phi:= \varphi^{cc}$ is a Lipschitz $c$-concave map. To complete the proof, it remains to check that $\Gamma \subset \partial_c \phi$. Take $(n,x) \in \Gamma$ and recall (\ref{gambl}): $\varphi (n)+\varphi^c (x)= c(n,x)$. This yields
 $$\varphi(n) = c(n,x)-\varphi^c(x) \geq \varphi^{cc}(n)$$
 which combined with (\ref{blurb})  gives
 $$ \phi= \varphi^{cc}= \varphi \mbox{ on } p_n (\Gamma).$$
 Since $\phi^c =\varphi^{ccc}= \varphi^c$ as proved above, (\ref{gambl}) is actually equivalent to $\phi(n)+\phi^c(n)=c(n,x)$ and the proof is complete.  
 \end{proof}
%%%%%%%%%%%%%%%%%%%%%%%%%%%%%%%%%%%%%%%%%%%%%%%%%%%%%%%%%%%%%%%%%%%% UNICITE !!!!!!

\subsection{Proof of Theorem \ref{transp}: the uniqueness part}\label{recover}
In Section \ref{solve}, we proved the existence of a solution of Kantorovitch's variational problem $(\phi,\phi^c)$ where $\phi$ is a Lipschitz $c$-concave map. It remains to prove the uniqueness of this pair. We start with the following observation, called the double complexification trick. Let $(\phi,\psi) \in {\mathcal A}$. By definition of ${\mathcal A}$, we have
$$ J(\phi,\psi) \leq J(\phi, \phi^c) \leq J(\phi^{cc},\phi^c)$$
where $J(\phi,\psi) = \int_{\S} \phi(n) d\sigma(n) + \int_{\S} \psi(x) d\mu(x)$ (the $c$-transform of a Lipschitz function is a Lipschitz function as well, thanks to Proposition \ref{elip}). So, any maximising pair $(\phi,\psi)$ has to satisfy
$$
\begin{array}{cc}
\psi= \phi^c & \mu\mbox{-a.e.}\\
\phi=\phi^{cc} & \sigma\mbox{-a.e.}
\end{array}
$$
By continuity, this implies  $\phi=\phi^{cc}$, and $\phi$ and $\phi^c$ are Lipschitz $c$-concave maps. (Similarly, $ J(\phi,\psi) \leq J(\psi^c, \psi)$ yields $\psi^c=\phi$.) Therefore, the set $\partial_c\phi=\{(n,x) \in \S \times \S; \phi(n)+\phi^c(x)=c(n,x)\}$ is contained in the open set $\{c<+\infty\}$ and is a compact subset of $\S\times \S$.

Now, for any optimal plan $\Pi \in \Gamma_0(\sigma,\mu)$, we have
$$  \int_{\S} \phi(n) d\sigma(n) + \int_{\S} \phi^ c(x) d\mu(x) =\int_{\S} \phi(n) d\sigma(n) + \int_{\S} \psi(x) d\mu(x) = \int_{\S \times \S} c(n,x)\, d\Pi(n,x)$$
Subtracting the left-hand term to the right-hand term yields $ c(n,x) -\phi(n)-\phi^c(x) = 0 \;\Pi-\mbox{a.e.}$. By compactness of $ \partial_c \phi$, this yields
$$ \supp (\Pi) \subset \partial_c \phi.$$
The above property holds true for every optimal plan. Therefore, using the compactness of $ \partial_c \phi$ again, we obtain 
\begin{equation}\label{me41}
\bar{\bigcup_{\Pi_0 \in \Gamma_0(\sigma,\mu)} Supp (\Pi_0) }\bigcap \{ c < + \infty\} \subset \partial_c \phi\subset \{ c < + \infty\}
\end{equation}
namely the condition that appears in Theorem \ref{main}. It remains to prove that this condition determines $\phi$ up to adding a constant.

We set $N=\{n \in \S; \phi \mbox{ is not differentiable at } n\}$. Since $\phi$ is Lipschitz, $\sigma(N)=0$. Let $n$ be in $\S\setminus N$ and $x \in \partial_c \phi (n)\subset B(n,\pi/2)$. By definition of $\phi^c$, this implies that the map $z \longmapsto c(z,x)-\phi(z)$ admits a minimum at $n$, therefore its gradient equals ${0}$. This gives (with a slight abuse of notation)
$$ c'(d(n,x)) \nabla d_x(n) = \nabla \phi (n)$$
(where $d_x(n):=d(n,x)$ and $c'(0)=0$) which entails
$$ \arctan ( (|\nabla \phi (n)|) = d(n,x) < \frac{\pi}{2}.$$
Using $x = \exp_n(-d(n,x)\nabla d_x(n))$, we get that $x$ is unique and given by the expression
$$ T(n) = \exp_n\Big(-\frac{\arctan (|\nabla \phi (n)|)}{|\nabla \phi (n)|}\nabla \phi (n)\Big).$$
We have proved that
$$ \Gamma \cap (\S\setminus N \times \S) \subset \{(n,T(n); n \in \S\setminus N\}.$$
Therefore, if two Lipschitz $c$-concave maps $\phi_1$ and $\phi_2$ satisfy (\ref{me41}) then for $\sigma$-almost every $n$, $\nabla \phi_1(n)=\nabla \phi_2(n)$. To conclude, we use that a Lipschitz function whose derivative is null $\sigma$-a.e. is a constant function. 

\begin{remark}\label{rem} We actually proved that under the assumptions of the theorem above, there exists a unique optimal transport plan and this plan is induced by the map $T$. Once the regularity of $\phi$ is proved, the strategy employed in the proof above is now classical. On Riemannian manifolds, it is due to Mc Cann \cite{McC01}. Another proof in this particular case is given in \cite{Oliker}.

The proof of Theorem \ref{BMCC} goes as follows. According to Remark \ref{rem42}, we get that for all $\omega \in \C\setminus \{\S\}$
$$ \mu(\S \setminus \omega) > (f\sigma) (\omega^ *). $$
Note that the arguments required to prove Proposition \ref{prop42} remain true if $\sigma$ is replaced by $(f\sigma)$. As a consequence,
the reinforced assumption (\ref{rein}) holds with $f\sigma$ in place of $\sigma$. With this property at our disposal, it suffices to repeat the arguments in Sections 4.2 and 4.3 with $f\sigma$ instead of $\sigma$ to get a proof of Theorem \ref{BMCC}.
\end{remark}

%%%%%%%%%%%%%%%%%%%%%%%%%%%%%%%%%%%%%%%%%%%%%%%%%%%%%%%%%%%%%%%%%%

\section*{Appendix}

In this appendix, we prove the following result 

\begin{lemma}\label{parti} Let $\mu$ be a finite Borel measure on the unit sphere $\Sp^m$ endowed with its canonical distance $d$. Suppose that $\mu$ is absolutely continuous with respect to the standard uniform measure on $\Sp^m$ and $\mu(\Sp^m)$ is a positive rational number. Then for any $\alpha>0$, there exists a finite partition $(P_i)_{1\leq i\leq K}$ of $\S$ (depending on $\alpha$) such that for all $i$, $\mu(P_i)>0$ is a rational number and $diam (P_i) < \alpha$.
\end{lemma}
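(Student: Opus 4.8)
The plan is to build the desired partition in two stages: first produce a finite partition by sets of small diameter whose $\mu$-measures may be irrational, then perturb it slightly to make each measure rational while keeping the diameters small and all measures positive. For the first stage I would simply cover $\Sp^m$ by finitely many open balls $B(y_j, \alpha/2)$, $j = 1, \dots, K$ (compactness), and disjointify them by setting $P_j' = B(y_j,\alpha/2) \setminus \bigcup_{i<j} B(y_i,\alpha/2)$; discarding the empty pieces gives a finite Borel partition with $\mathrm{diam}(P_j') < \alpha$. After merging, relabel so that $\mu(P_j') > 0$ for every $j$ in the new index set; this uses that there are only finitely many pieces, so the nonempty-measure ones still cover $\Sp^m$ up to a $\mu$-null set, which we can absorb into any one piece.

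For the second stage the key point is the absolute continuity of $\mu$: since $\mu \ll \sigma$ and $\sigma$ is non-atomic, $\mu$ is non-atomic, hence for each $j$ the map $t \mapsto \mu(P_j' \cap B(y_j,t))$ (or any exhaustion of $P_j'$ by nested measurable subsets) is continuous and takes every value in $[0,\mu(P_j')]$ by the intermediate value theorem. Fix a small $\delta>0$. For $j = 1,\dots,K-1$ choose a measurable subset $Q_j \subset P_j'$ with $\mu(Q_j) \in \mathbb Q$, $\mu(Q_j) > 0$, and $\mu(P_j' \setminus Q_j) < \delta/K$; this is possible because the rationals are dense in $[0,\mu(P_j')]$ and every value in that interval is attained. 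Then define $P_j = Q_j$ for $j < K$ and let $P_K = P_K' \cup \bigcup_{j<K}(P_j' \setminus Q_j)$ collect all the leftover mass. Each $P_j$ with $j<K$ has rational positive measure and diameter $<\alpha$; the issue is $P_K$.

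The main obstacle, and the only real content, is controlling $\mu(P_K)$: it is $\mu(P_K') + \sum_{j<K}\mu(P_j'\setminus Q_j)$, which need not be rational. Here I would use the hypothesis that $\mu(\Sp^m) \in \mathbb Q$: since $\mu(P_K) = \mu(\Sp^m) - \sum_{j<K}\mu(Q_j)$ and each $\mu(Q_j)$ is rational, $\mu(P_K)$ is automatically rational, and it is positive as soon as $\delta < \mu(P_K')$ (choose $\delta$ that small at the outset, which also guarantees $P_K \supset P_K'$ is nonempty). Finally, $\mathrm{diam}(P_K)$ might exceed $\alpha$ because $P_K$ is a union of pieces scattered over the sphere; to fix this, instead of dumping all leftovers into one global set, absorb each residual $P_j' \setminus Q_j$ into a single fixed piece $P_{j_0}$ only when $P_j'$ and $P_{j_0}$ lie in a common ball of radius $\alpha/2$ — but more simply, refine the original cover so that it is a genuine partition subordinate to balls of radius $\alpha/4$, pick one distinguished cell, and transfer residual mass only between adjacent cells, iterating finitely often; each transfer changes finitely many rational values and the diameters stay below $\alpha$. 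I would present the clean version: take the balls of radius $\alpha/4$, and at the final step note that merging the single leftover cell with one neighbour keeps the diameter at most $\alpha/4 + \alpha/4 + \alpha/4 < \alpha$, while rationality of the merged measure follows from $\mu(\Sp^m)\in\mathbb Q$ exactly as above.
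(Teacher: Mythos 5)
Your route is genuinely different from the paper's (the paper argues by induction on the dimension, cutting $\S$ into annuli around a pole whose $\mu$-masses are made rational via the intermediate value theorem and then applying the one-dimensional case to the pushforwards onto the equator), but the decisive step of your plan --- redistributing the residual mass while keeping every diameter below $\alpha$ --- is precisely where the proof is missing, and as written it does not work. After your second stage there is one residual set $P_j'\setminus Q_j$ inside \emph{each} cell, scattered over the whole sphere, not ``a single leftover cell'', so there is nothing to ``merge with one neighbour''. The iterated scheme ``transfer residual mass only between adjacent cells'' is the entire content of the lemma and is only gestured at: to make it precise you need an ordering of the cells (a walk or spanning tree of their adjacency graph) along which residuals are passed, and at every step the receiving cell must carry enough $\mu$-mass that one can carve out of it a set of rational measure \emph{containing} the incoming residual, so that the new residual stays inside that single cell; otherwise after a few transfers the leftover spreads over several cells and the diameter control collapses. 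This requirement fails on cells where $\mu$ vanishes. A smaller but real problem of the same nature: absorbing the $\mu$-null pieces ``into any one piece'' already violates $\mathrm{diam}(P_i)<\alpha$ when that piece is far away, and the positivity $\mu(P_i)>0$ demanded by the statement is unattainable whenever the complement of $\supp\mu$ contains a ball of radius $\alpha$.

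Moreover the obstruction is not technical: under the stated hypotheses the conclusion can fail outright. Take $\mu$ absolutely continuous, concentrated on two caps at mutual distance larger than $\alpha$, with masses $a$ and $1-a$ where $a\notin\Q$. In any finite partition into sets of diameter $<\alpha$, each piece meets at most one cap, so the pieces meeting the first cap have total mass exactly $a$, which cannot be a finite sum of rationals. Hence no redistribution scheme (nor any other argument) can close your gap without an additional hypothesis, e.g.\ that $\mu$ charges every open ball; under that assumption every cell has positive mass and your tree-transfer scheme, run with cells of diameter $<\alpha/3$, can indeed be completed. For fairness, the paper's own inductive proof relies implicitly on the same kind of hypothesis when it selects cut points realizing rational $\mu$-masses inside windows of length $\alpha$ (this choice is impossible across gaps of $\supp\mu$ longer than $\alpha$), and in the application the lemma is saved by rationality and small diameter alone (positivity is never used) together with a mollification that can be chosen positive on all of $\S$. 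So: different approach, correct identification of the main difficulty, but the key redistribution step is not proved and cannot be proved in the stated generality.
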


\begin{remark} When $\mu$ is the uniform probability measure $\sigma$, the proof below together with the expression of $\sigma$ in polar coordinates guarantee that we can further require  $\forall i, \;\sigma (P_i)= 1/M$, being $M$ a sufficiently large integer.  
\end{remark}
\begin{proof}
The proof is by induction on the dimension $m$. For $m=1$, fix a number $\alpha_1>0$. Then, partition $\Sp^1$ into finitely many left-open, right-closed segments $(I_j)_{1\leq j\leq K_1}$ whose length $l(I_j)$ satisfies $ l(I_j)<\alpha_1$ and $\mu(I_j) \in \Q$ (we use that $s \mapsto \mu((a,s])$ is continuous); $\mu(I_{K_1}) \in \Q$ is guaranteed by $\mu(\Sp^1)\in \Q$. For $m=2$, fix a point $N\in \Sp^2$ and $\alpha_2>0$. Consider a partition $(C_i)_{1 \leq i \leq K_2}$ where $C_1$ is the closed ball with radius $R_1$ and center $N$, $C_i= \{z\in \Sp^2; R_{i-1} <  d(N,z) \leq R_i\}$ for $ i \in \{2,\cdots,K_2-1\}$ and $C_{K_2}$ is the closed ball with radius $\pi-R_{K_2}$ and center $-N$. We require that the $(R_i)$ satisfy: 
$$ \alpha_2/2 < R_1< \alpha_2, \quad    \alpha_2/2 < R_i-R_{i-1} < \alpha_2, \quad  \pi-R_{K_2} < \alpha_2$$
and $\mu(C_i)\in \Q$. Let us set $p$ the projection onto the equator relative to $N$. Note that the measures $(p_{\sharp} (\mu \res C_i))_{1 \leq i\leq K_2})$ are absolutely continuous with respect to the uniform measure on the circle thus, applying the case $m=1$ to all the measures $(p_{\sharp} (\mu \res C_i))_{1 \leq i\leq K_2})$, we get a partition $(P_i)_{1\leq i\leq K}$ of $\Sp^2$ (namely $(C_i \cap p^{-1}(I_j^i))_{i,j}$, being $(I_j^i)_j$ the partition corresponding to $p_{\sharp} (\mu \res C_i)$) such that $\mu(P_i) \in \Q$. Moreover, the expression of the spherical distance in polar coordinates implies that the diameter of any $P_i$ is smaller than $\alpha$ provided $\alpha_1$ and $\alpha_2$ are sufficiently small. The higher dimensional case easily follows from the arguments used for $m=2$.

\end{proof}

%\begin{appendices}
%\section{parti}
%\end{appendices}

%%%%%%%%%%%%%%%%%%%%%%%%%%%%%%%%%%%%%%%%%%%%%%%%%%%%%%%%%%%%%%%%%%%
\bibliographystyle{amsplain}
%\bibliography{bs}
\providecommand{\bysame}{\leavevmode\hbox to3em{\hrulefill}\thinspace}
\providecommand{\MR}{\relax\ifhmode\unskip\space\fi MR }
% \MRhref is called by the amsart/book/proc definition of \MR.
\providecommand{\MRhref}[2]{%
  \href{http://www.ams.org/mathscinet-getitem?mr=#1}{#2}
}
\providecommand{\href}[2]{#2}

\end{document}